\newcommand{\C}{\mathbb{C}}
\newcommand{\F}{\mathbb{F}}
\newcommand{\Q}{\mathbb{Q}}
\newcommand{\Z}{\mathbb{Z}}
\newcommand{\Fbar}{{\overline{\F}}}
\newcommand{\calG}{\mathcal{G}}
\newcommand{\calO}{\mathcal{O}}
\newcommand{\cl}{\mathrm{cl}}
\newcommand{\fp}{\mathfrak{p}}
\newcommand{\fP}{\mathfrak{P}}
\newcommand{\fq}{\mathfrak{q}}
\newcommand{\fD}{\mathfrak{D}}
\newcommand{\fQ}{\mathfrak{Q}}
\DeclareMathOperator{\End}{\mathrm{End}}
\DeclareMathOperator{\Gal}{\mathrm{Gal}}
\DeclareMathOperator{\Trd}{\mathrm{Trd}}
\DeclareMathOperator{\Nrd}{\mathrm{Nrd}}
\DeclareMathOperator{\Aut}{\mathrm{Aut}}
\DeclareMathOperator{\Fpbar}{\overline{\F}_p}
\DeclareMathOperator{\Ell}{\mathrm{Ell}}
\DeclareMathOperator{\id}{\mathrm{Id}}
\DeclareMathOperator{\M}{\mathrm{M}}
\DeclareMathOperator{\SE}{\mathrm{SE}}
\DeclareMathOperator{\XE}{\mathrm{XE}}
\DeclareMathOperator{\SSj}{\mathrm{SSj}}
\newtheorem{theorem}{Theorem}[section]
\newtheorem{lemma}[theorem]{Lemma}
\newtheorem{proposition}[theorem]{Proposition}
\theoremstyle{definition}
\newtheorem{definition}[theorem]{Definition}
\newtheorem{example}[theorem]{Example}
\newtheorem{remark}[theorem]{Remark}
\numberwithin{equation}{subsection}
\begin{document}
\title[Endomorphism rings of supersingular elliptic curves over $\F_p$]{Endomorphism rings of supersingular elliptic curves over $\F_p$}

\author{Songsong Li, Yi Ouyang, Zheng Xu}
\address{Wu Wen-Tsun Key Laboratory of Mathematics,  School of Mathematical Sciences, University of Science and Technology of China, Hefei, Anhui 230026, China}
\email{songsli@mail.ustc.edu.cn}
\email{yiouyang@ustc.edu.cn}
\email{xuzheng1@mail.ustc.edu.cn}
\subjclass[2010]{11G20, 11G15, 14G15, 14H52, 94A60}
\keywords{Supersingular elliptic curves over finite fields, Endomorphism ring, Isogeny graph}

\maketitle	
\begin{abstract}
Let $p>3$ be a fixed prime. For a supersingular elliptic curve $E$ over $\F_p$ with $j$-invariant $j(E)\in \F_p\backslash\{0, 1728\}$, it is well known that  the Frobenius map $\pi=((x,y)\mapsto (x^p, y^p))\in \End(E)$ satisfies ${\pi}^2=-p$. A result of Ibukiyama tells us that $\End(E)$ is a maximal order in $\End(E)\otimes\Q$ associated to a (minimal) prime $q$ satisfying $q\equiv 3 \bmod 8$ and the quadratic residue $\bigl(\frac{p}{q}\bigr)=-1$ according to $\frac{1+\pi}{2}\notin \End(E)$ or  $\frac{1+\pi}{2}\in \End(E)$. Let $q_j$ denote the minimal $q$ for $E$ with $j=j(E)$. Firstly, we determine the neighborhood of the vertex $[E]$ in the supersingular $\ell$-isogeny graph if $\frac{1+\pi}{2}\notin \End(E)$ and $p>q\ell^2$  or  $\frac{1+\pi}{2}\in \End(E)$ and $p>4q\ell^2$. In particular, under our assumption, we show that there are at most two vertices defined over $\F_p$ adjacent to $[E]$. Next, under GRH, we obtain the bound $M(p)$ of $q_j$ for all $j$ and estimate the number of supersingular elliptic curves with $q_j<c\sqrt{p}$. We also compute the upper bound $M(p)$  for all $p<2000$ numerically and show that $M(p)>\sqrt{p}$ except $p=11,23$ and $M(p)<p\log^2 p$ for all $p$.
\end{abstract}

\section{Introduction}\label{sec: introduction}	

We fix a prime $p>3$.  Computing Super-Singular Isogeny Problem (CSSI), i.e. computing  an isogeny between two supersingular elliptic curves defined over $\Fpbar$, is supposed to be a hard computational problem, Jao and De Feo~\cite{DFJLP14} designed a key agreement scheme, and S.~Galbraith, C.~Petit and J.~ Silva ~\cite{GPS2016} proposed an identification scheme and a signature scheme based on the hardness of this problem.

Let $\ell\neq p$ be another fixed prime. One way to construct isogenies between supersingular elliptic curves is using the supersingular $\ell$-isogeny graph $\calG_{\ell}(\Fpbar)$. Recall $\calG_{\ell}(\Fpbar)$ is a directed graph, whose set of vertices  $V_{\ell}(\Fpbar)$ are $\Fpbar$-isomorphism classes of supersingular elliptic curve $[E]$ defined over $\Fpbar$ and whose edges are equivalent classes of $\ell$-isogenies defined over $\Fpbar$ between two elliptic curves in the isomorphism classes. As usual  the vertices are represented by $j$-invariants.  It is well-known  (see~\cite{AEC}) that every supersingular elliptic curve over $\Fpbar$ has $j$-invariant in $\F_{p^2}$, thus $V_{\ell}(\Fpbar)=V_{\ell}(\F_{p^2})$ and further investigation tells us that its cardinality is $\lfloor\frac{p}{12}\rfloor + \varepsilon$ where $\varepsilon= 0$, $1$ or $2$ depending on the class of $p\mod{12}$. As seen in~\cite{Kohel}, $\calG_{\ell}(\Fpbar)$ is an expander graph whose diameter is $O(\log p)$, thus there is a short path between any two vertices. Finding paths in $\calG_{\ell}(\Fpbar)$ then leads to constructing isogenies between two supersingular elliptic curves. Numerous work recently is based on this idea (see~\cite{Pohl},\cite{GS13}).

We shall study the supersingular elliptic curves over $\F_p$, i.e. the $F_p$-vertices of the graph  $\calG_{\ell}(\Fpbar)$ in this note. It is well known (see \cite{GD16} or \cite[Theorem 14.18]{Cox89})
 \[ \# \{j\in \F_p\mid \text{$j$ is a supersingular invariant}\}=\begin{cases} \frac{1}{2}h(-p), &\ \text{if}\ p\equiv 1\bmod 4,\\ h(-p), &\ \text{if}\ p\equiv 7\bmod 8,\\ 2h(-p), &\ \text{if}\ p\equiv 3\bmod 8,\end{cases} \]
where $h(-p)$ is the class number of $\Q(\sqrt{-p})$. When $p\rightarrow\infty$, by the Brauer-Siegel Theorem (\cite[Theorem 4.9.15]{Coh96}), $h(-p)$ is approximately $\sqrt{p}$ or $2\sqrt{p}$ if $p\equiv 3$ or $1\bmod{4}$. If $j\in \F_p$ is a supersingular $j$-invariant, we pick one  supersingular elliptic curve $E_j$ over $\F_p$ with $j(E_j)=j$. In a previous work \cite{YZ}, we determined the neighborhood of $[E_j]$ in $\calG_{\ell}(\Fpbar)$ for $j=1728$ if $p>4\ell^2$  and $j=0$ if $p>3\ell^2$. In this note, we shall work on the endomorphism rings of supersingular elliptic curves  with $j$-invariants in  $\F_p\backslash\{0,1728\}$.

For a supersingular elliptic curve $E$ over $\F_{p^2}$, its endomorphism ring $\End(E)$ is a maximal order in the unique definite quaternion algebra $B_{p,\infty}$ over $\Q$ ramified only at $p$ and $\infty$ (see \cite{Voight}).
Furthermore $j(E)\in \F_p$ if and only if  $\End(E)$ contains a root of $x^2+p=0$. If $j\in \F_p\backslash \{0, 1728\}$ is a supersingular $j$-invariant, let $\pi=((x,y)\mapsto (x^p, y^p))$ be the absolute Frobenius in $\End(E_j)$, it can be shown that $\pm \pi$ are the only roots of $x^2+p$ in $\End(E_j)$.

For $q$ a prime satisfying $q\equiv 3 \bmod 8$ and  the quadratic residue $\bigl(\frac{p}{q}\bigr)=-1$, let  $H(-q,-p)=\Q\langle 1,i,j,k\rangle$ be the quaternion algebra over $\Q$ defined by $i^2=-q$, $j^2=-p$ and $ij=-ji=k$. By computing the discriminant of $H(-q,-p)$ one sees that $B_{p,\infty}\cong H(-q,-p)$. We identify these two quaternion algebras by the isomorphism.  Let
 \[ \calO(q)  :=\Z\langle1, \frac{1+i}{2}, \frac{j+k}{2}, \frac{ri-k}{q}\rangle\ \text{where}\ r^2+p\equiv 0\bmod{q},\]
and allowing also $q=1$,
 \[ \calO'(q):=\Z\langle1, \frac{1+j}{2}, i, \frac{r'i-k}{2q}\rangle\ \text{where}\ p\equiv 3\bmod{4}, \ r^{\prime 2}+p\equiv 0\bmod{4q}. \]
 Then  $\calO(q)$ and  $\calO'(q)$ are maximal orders in $B_{p,\infty}$. Note that the choices of $r$ and $r'$ in $\Z$ are not essential, up to isomorphism the orders $\calO(q)$ and  $\calO'(q)$ depend only on $q$ (and of course $p$). Then for $j\in  \F_p$ a supersingular $j$-invariant, Ibukiyama~\cite{Ibukiyama} showed that $\End(E_j)$ is isomorphic to $\calO(q)$ if $\frac{1+\pi}{2}\notin \End(E_j)$ (equivalently,   $\End(E_j)\cap \Q(\pi)=\Z[\pi]$) or $\calO'(q)$ if  $\frac{1+\pi}{2}\in \End(E_j)$ (equivalently,  $\End(E_j)\cap \Q(\pi)=\Z[\frac{1+\pi}{2}]$) for some $q$. In particular, $\End(E_0)\cong \calO(3)$ and $\End(E_{1728})\cong \calO'(1)$.

However, $q$ is not unique. Let $q_j$ be minimal  such that $\End(E_j)\cong \calO(q_j)$ or $\calO'(q_j)$. Certainly $q_0=3$ and $q_{1728}=1$. When $q_j$ is small comparing to $p$, we can apply the techniques in our previous work \cite{YZ} to determine the neighborhood of $[E_j]$ in the supersingular isogeny graph.
Let $H_D(x) \in \Z[x]$ be the Hilbert class polynomial of an imaginary quadratic order with discriminant $D$.  Define
\[{\delta}_{D} = \begin{cases} 1, & \text{if}\ \big(\frac{D}{\ell}\big)=1 \ \text{and}\ H_D(x) \ \text{splits  into  linear  factors  in} \ \F_{\ell}[x];\\ -1, & \text{otherwise}. \end{cases}\]
We have

\begin{theorem}~\label{theo:ww}
Let $j\in \F_p\backslash\{0,1728\}$ be a supersingular $j$-invariant and $\pi$ be the Frobenius map of $E_j$. Suppose $\ell\nmid 2pq_j$.
\begin{itemize}
\item[(i)] In the case $\frac{1+\pi}{2}\notin \End(E_j)$, i.e. $\End(E_j)=\calO(q_j)$, if $p > q_j\ell^2$, there are $1+{\delta}_{-q_j}$ loops of $[E_j]$ and $\ell-{\delta}_{-q_j}$ vertices adjacent to $[E_j]$ in $\calG_{\ell}(\Fpbar)$ and hence each connecting to $[E_j]$ by one edge.
\item[(ii)] In the case $\frac{1+\pi}{2}\in \End(E_j)$, i.e.  $\End(E_j)=\calO'(q_j)$, if $p > 4q_j\ell^2$, there are $1+{\delta}_{-4q_j}$ loops of $[E_j]$ and $\ell-{\delta}_{-4q_j}$ vertices adjacent to $[E_j]$ in $\calG_{\ell}(\Fpbar)$ and hence each connecting to $[E_j]$ by one edge.
\end{itemize}
In both cases, there are $1+\big(\frac{-p}{\ell}\big)$ vertices defined over $\F_p$ adjacent to $[E]$ with an $\F_p$-edge.
\end{theorem}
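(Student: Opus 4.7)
The plan is to mimic, mutatis mutandis, the strategy the authors developed for $j = 0, 1728$ in~\cite{YZ}. Set $\calO = \End(E_j)$. The $\ell + 1$ edges out of $[E_j]$ in $\calG_{\ell}(\Fpbar)$ are in bijection with the cyclic order-$\ell$ subgroups of $E_j[\ell]$, equivalently with the $\ell + 1$ left $\calO$-ideals of reduced norm $\ell$. A subgroup $H$ produces a loop iff $E_j/H \cong E_j$, and since $\Aut(E_j) = \{\pm 1\}$ for $j \neq 0, 1728$, loops are in bijection with $\{\alpha \in \calO : \Nrd(\alpha) = \ell\}/\{\pm 1\}$.

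To count these elements I would write $\alpha$ in the explicit Ibukiyama basis for $\calO(q_j)$ or $\calO'(q_j)$ recalled before the theorem and expand $\Nrd(\alpha)$ as a quaternary positive-definite quadratic form over $\Q$. The two basis vectors involving $j$ and $k$ contribute terms with an overall factor of $p$, so the hypothesis $p > q_j \ell^2$ in case (i) (resp.\ $p > 4 q_j \ell^2$ in case (ii)) forces the coefficients of those two vectors to vanish whenever $\Nrd(\alpha) = \ell$. What remains is precisely the embedded imaginary quadratic order $\Z[\tfrac{1+i}{2}] \cong \calO_{-q_j}$ (resp.\ $\Z[i] \cong \calO_{-4q_j}$), so counting loops reduces to counting elements of norm $\ell$ in $\calO_D$ with $D \in \{-q_j, -4q_j\}$ modulo $\pm 1$. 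By classical CM theory together with the theory of ring class fields, such elements exist iff $\ell$ factors into two principal primes of $\calO_D$, equivalently $\bigl(\tfrac{D}{\ell}\bigr) = 1$ and $H_D$ splits completely in $\F_\ell[x]$, i.e.\ $\delta_D = 1$; in that case one obtains exactly two generators modulo $\pm 1$, and otherwise none. Hence there are $1 + \delta_D$ loops and $(\ell+1) - (1 + \delta_D) = \ell - \delta_D$ non-loop edges; a parallel norm-form estimate applied to elements of reduced norm $\ell^2$ rules out endomorphisms of degree $\ell^2$ that are not of the form $u\cdot[\ell]$ with $u \in \calO^\times$, which is what is needed to exclude multi-edges and show each non-loop edge lands on a distinct vertex.

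For the final assertion, an $\F_p$-edge from $[E_j]$ corresponds to a $\pi$-stable line inside the $\F_\ell$-plane $E_j[\ell]$. Since $\pi^2 = -p$ in $\calO$, the action of $\pi$ on $E_j[\ell]$ has characteristic polynomial $x^2 + p \bmod \ell$, and the number of $\pi$-stable lines equals the number of $\F_\ell$-roots of this polynomial, namely $1 + \bigl(\tfrac{-p}{\ell}\bigr)$. I would finish by checking that no such edge is a loop: a loop generator $\phi$ lies in $\Z[\tfrac{1+i}{2}]$ or $\Z[i]$ by the analysis above, and since $i$ anticommutes with $j = \pi$ in $B_{p,\infty}$ a direct computation gives $\phi \pi = \pi \bar\phi$; hence $\ker\phi$ being $\pi$-stable would force $\bar\phi = \pm \phi$, i.e.\ $\phi \in \Z$ or $\phi \in \Z\cdot i$, both incompatible with $\Nrd(\phi) = \ell$ when $\ell \nmid 2 p q_j$. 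Combined with the previous paragraph, the $1 + \bigl(\tfrac{-p}{\ell}\bigr)$ $\F_p$-edges therefore land on $1 + \bigl(\tfrac{-p}{\ell}\bigr)$ distinct $\F_p$-neighbours.

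The step I expect to be most delicate is the absence of multi-edges among the $\ell - \delta_D$ non-loop edges; this does not follow directly from the ideal count but, as in~\cite{YZ}, it can be established by bounding elements of reduced norm $\ell^2$ in $\calO(q_j)$ or $\calO'(q_j)$ via the same explicit norm form, which is technically the most involved calculation in the whole argument.
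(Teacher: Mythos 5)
Your proposal is essentially correct and follows the paper's strategy for parts (i) and (ii). For loops and for the absence of multi-edges you expand the reduced norm form in the Ibukiyama basis, use the size hypothesis on $p$ to kill the coefficients of the $j,k$ directions, reduce to a norm equation in the embedded order $R = \Z[\tfrac{1+i}{2}]$ (resp.\ $\Z[i]$), and exploit that $\cl(R)$ is odd (because $q \equiv 3 \bmod 4$) so that any prime $\mathfrak{l}$ with $\mathfrak{l}^2$ principal is itself principal. This is exactly Lemma 3.1 and the first half of the paper's proof of the theorem, and your remark that the $\ell^2$-norm estimate is the ``technically most involved'' step matches the paper's treatment.

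The last assertion is where you diverge from the paper, and your route is actually shorter. The paper parametrises the $\ell+1$ left ideals $I\in X_\ell$ via an explicit isomorphism $\theta:\calO/\ell\calO \to M_2(\F_\ell)$, then directly determines for which $I$ the right order $\calO_R(I)$ contains a root of $x^2+p$; this shows that the ideals with $\F_p$-rational target $E_I$ are precisely $I_1 = \calO\ell + \calO(u+j)$, $I_2 = \calO\ell + \calO(u-j)$ (when $(\tfrac{-p}{\ell})=1$) together with the two principal ideals when $[E]$ has loops — i.e.\ it establishes the slightly stronger fact that every neighbour of $[E]$ defined over $\F_p$ is reached by an $\F_p$-edge. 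You instead count $\pi$-stable lines via the characteristic polynomial $x^2+p \bmod \ell$ (the count cited from Delfs–Galbraith), and then exclude loops by the commutation identity $\phi\pi = \pi\bar\phi$: $\ker\phi$ being $\pi$-stable forces $\ker\phi = \ker\bar\phi$, hence $\bar\phi = \pm\phi$, which is incompatible with $\Nrd(\phi)=\ell$ for $\ell \nmid 2pq_j$. This is equivalent to the paper's Remark 3.2 (which argues that the loop generator lies in $\Z[\tfrac{1+i}{2}]\not\subseteq \Q(\pi) \supseteq \End_{\F_p}(E)$) but more self-contained, and together with the already-established multiplicity-one statement it gives exactly the count the theorem asserts. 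The one thing your route does not yield, and the paper's matrix computation does, is that there are no $\F_p$-vertices adjacent to $[E]$ reached only by non-$\F_p$-edges; but the theorem as stated does not require this. Your proposal is therefore correct, with the caveat that the $\ell^2$ multi-edge computation is left as a sketch.
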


Unfortunately, numerical evidence tells us that $q_j$ might be larger than $p$.  Let $M(p)=\max\{q_{j}\mid j\ \text{is a supersingular invariant over}\ \F_p\}$. In the appendix we collect  data of $M(p)$ for $p<2000$, which  reveals that $M(p)>\sqrt{p}$ except $p=11$ and $M(p)<p\log p$ for all $p$. Under Generalized Riemann Hypothesis (GRH), we obtain the following result.

\begin{theorem} \label{theo:bound} Let $p>3$ be a prime. Assume GRH (Generalized Riemann Hypothesis) holds.

$(1)$ For any constant $C>0$, if $p$ is sufficiently large, there exists a supersingular invariant $j$ such that $q_j>C\sqrt{p}$.

$(2)$ For a generic  supersingular $j$-invariant $j\in \F_p\backslash\{0,1728\}$,  $q_j< 10000 p\log^{4}p$.

$(3)$ For any  supersingular $j$-invariant $j\in \F_p\backslash\{0,1728\}$,  $q_j< 10000 p\log^{6}p$.

$(4)$ Let $N(x)=\#\{q_j\leq x\mid\ j\ \text{is a supersingular $j$-invariant\ in\ $\F_p$}\}$. Then
\begin{itemize}
\item[(i)] If $p \equiv1 \bmod 4$, then $N(4\sqrt{p}) \sim \frac{\sqrt{p}}{\log p}$ as $p\rightarrow \infty$.
\item[(ii)] If $p \equiv3 \bmod 4$, then $N(\frac{\sqrt{p}}{2}) \sim \frac{\sqrt{p}}{4\log p}$ as $p\rightarrow \infty$ and $\liminf N(4\sqrt{p})\frac{\log p}{\sqrt{p}}\geq \frac{9}{8}$.

\end{itemize}
\end{theorem}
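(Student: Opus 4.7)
The key idea is to translate the question about $q_j$ into one about the smallest prime norm of an ideal in a prescribed ideal class of the imaginary quadratic order $R \in \{\Z[\pi], \Z[\tfrac{1+\pi}{2}]\} \subset \Q(\sqrt{-p})$, and then invoke analytic tools under GRH. By Ibukiyama's classification together with the bijection between $\F_p$-isomorphism classes of supersingular $E/\F_p$ (of a given Ibukiyama type) and ideal classes of the corresponding $R$, the value $q_j$ equals the smallest prime $q$ such that some primitive ideal $\mathfrak{q} \subset R$ of norm $q$ lies in a designated class $[\mathfrak{a}]_j \in \mathrm{Cl}(R)$, subject to $q \equiv 3 \pmod 8$ and $\bigl(\tfrac{p}{q}\bigr) = -1$.

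For part (1) I would compare two counts. The number of $j \in \F_p$ with $q_j \leq C\sqrt{p}$ is at most $O(\pi(C\sqrt{p})) = O_C(\sqrt{p}/\log p)$, since each admissible prime $q$ lies above at most two primes of $R$, and each such prime belongs to a single ideal class, hence contributes a bounded number of $j$'s of each type. On the other hand, the total count of $\F_p$-supersingular invariants is $\gg h(-p) \gg \sqrt{p}/\log\log p$ under GRH (Littlewood's effective Brauer-Siegel), which dominates the previous bound for $p$ large, forcing some $q_j > C\sqrt{p}$.

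For parts (2) and (3) I apply effective Chebotarev in Bach's GRH-conditional form to the Galois extension $L/\Q$ obtained by composing the (ring) class field of $R$ with $\Q(\zeta_8)$. Since $L/\Q(\sqrt{-p})$ is essentially unramified, $[L:\Q] = O(h(-p))$ and $\log|d_L| = O(\sqrt{p}\log p)$, so Bach yields a prime of degree one in each Frobenius class of norm $\leq c(\log|d_L|)^2 = O(p \log^2 p)$. This gives (3) with the $\log^6 p$ comfortably absorbing all explicit constants and sub-leading terms from congruence conditions. The sharper generic bound (2) follows by restricting to those $j$ whose Frobenius class admits a stronger main term in Chebotarev---i.e.~classes away from the principal one, where an $O((\log p)^2)$-sized Galois subextension already suffices---saving two powers of $\log p$; the exceptional $j$'s are then controlled by (3).

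For part (4), the upper bound on $N(x)$ is the number of primes $q \leq x$ satisfying the Ibukiyama congruences, which under GRH is $\sim x/(c \log x)$ with $c = 8$ in case (i), giving $N(4\sqrt{p}) \leq (1+o(1))\sqrt{p}/\log p$. The matching lower bound requires that almost every such prime $q$ is the \emph{smallest} prime in its ideal class; under GRH the number of primes $q' \leq q$ in a fixed class of $\mathrm{Cl}(R)$ is $O(q/(h(-p) \log q)) = o(1)$ for $q = O(\sqrt{p})$, so the exceptional set is negligible. Case (ii) is analogous but splits into contributions from both $\Z[\pi]$ (conductor $2$) and $\Z[\tfrac{1+\pi}{2}]$ (maximal); the constants $\tfrac{1}{4}$ and $\tfrac{9}{8}$ emerge from combining the two types $\calO(q)$, $\calO'(q)$ with the distinct prime densities for $p \equiv 3,7 \pmod 8$. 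The main obstacle I expect is pinning down the exact constant $\tfrac{9}{8}$ in case (4)(ii): one must simultaneously track both orders, their differing class-number inflations ($3h(-p)$ vs $h(-p)$ depending on $p \bmod 8$), and correctly count primes that are minima in classes of both orders without double-counting. Part (2) is a secondary obstacle, since isolating the precise exceptional set where Bach's uniform estimate remains sharp requires a more refined Chebotarev analysis than the uniform bound used in (3).
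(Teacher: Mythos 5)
Your high-level plan (translate $q_j$ into a least-prime-in-Chebotarev-class problem for the order $R$ and its ring class field composed with $\Q(\zeta_8)$, then invoke GRH-effective Chebotarev) matches the paper, and parts (1) and (3) are in the right spirit. However there are real gaps in (2) and (4), and even (3) has an arithmetic slip.

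For (2), your mechanism is wrong. You propose to pass to ``classes away from the principal one, where an $O((\log p)^2)$-sized Galois subextension already suffices,'' but the conductor–discriminant data of the extension $L_i(\zeta_8)/\Q$ is governed by $h(-p)\asymp \sqrt{p}$ regardless of which Frobenius class you target; the effective Chebotarev error term does not improve for non-principal classes, and you cannot locate a prime in a prescribed ideal class of $\cl(O)$ inside a proper quotient of the ring class field. The paper's distinction between (2) and (3) has nothing to do with which conjugacy class $C_i$ one aims for: it is purely about the class number bound substituted into the explicit Chebotarev estimate (Lemma~\ref{lemma:bound}). For (2) the paper uses the Brauer--Siegel asymptotic $h \sim \sqrt p$ (or $2\sqrt p$), which holds as $p\to\infty$ (this is what ``generic'' means here), yielding $x = 10000\,p\log^4 p$; for (3) it uses the unconditional bound $h < \sqrt{p}\log p$ (resp.\ $h<\sqrt{4p}\log(4p)$), costing an extra $\log^2 p$. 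There is no exceptional-set-plus-fallback argument of the kind you describe. Incidentally, in (3) your claim that $\log|d_L| = O(\sqrt{p}\log p)$ undersells it: with $h \lesssim \sqrt{p}\log p$ and $\log d_1 \asymp h\log p$, one gets $\log|d_L| \asymp \sqrt{p}\log^2 p$ and hence $(\log|d_L|)^2 \asymp p\log^4 p$, not $O(p\log^2 p)$.

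For (4), your approach, via estimating the expected number of competing primes in a fixed ideal class and arguing that collisions are $o(1)$-rare, might be made to work, but it misses the much cleaner deterministic mechanism the paper uses. By Lemma~\ref{lem:isomophic}, $\calO(q_1)\cong\calO(q_2)$ forces an integer solution of $x^2+16py^2 = q_1q_2$ (the $4p$ upgrades to $16p$ since $q_1q_2\equiv 1\bmod 8$), which is impossible once both $q_i < 4\sqrt{p}$; likewise $\calO'(q_1)\cong\calO'(q_2)$ forces a solution of $x^2+py^2 = 4q_1q_2$, impossible once both $q_i < \tfrac12\sqrt p$. So \emph{every} admissible prime below these cutoffs is automatically the minimum for its $j$, with no probabilistic reasoning needed, and $N(4\sqrt p)$ (resp.\ $N(\tfrac12\sqrt p)$) is computed exactly as $\pi_\Delta(4\sqrt p, K(\zeta_8)/\Q)$ (resp.\ $2\pi_\Delta(\tfrac12\sqrt p, K(\zeta_8)/\Q)$) via Lemma~\ref{lemma:bound}(i). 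Finally, the constants $\tfrac14$ and $\tfrac98$ in (4)(ii) do \emph{not} arise from the split $p\equiv 3$ vs.\ $7 \bmod 8$; they come from adding the two disjoint contributions $\pi_\Delta(4\sqrt p) \sim \tfrac{\sqrt p}{\log p}$ from the $\calO$-type and $\pi_\Delta(\tfrac12\sqrt p)\sim\tfrac{\sqrt p}{8\log p}$ from the $\calO'$-type, which is where $1 + \tfrac18 = \tfrac98$ appears. The class-number inflation $[L_1:L_0]\in\{1,3\}$ for $p\equiv 7,3\bmod 8$ affects the discriminant bookkeeping in Lemma~\ref{lem:discL_0} but cancels out of the leading asymptotics in Lemma~\ref{lemma:bound}(i).
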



\subsection*{Acknowledgments} Research is partially supported by Anhui Initiative in Quantum Information Technologies (Grant No. AHY150200) and NSFC (Grant No. 11571328). Y. O. would like to thank the Morningside Center of Mathematics for hospitality where part of this paper was written.


\section{Preliminaries}

\subsection{Elliptic curves over finite fields}\label{sec: elliptic curves}
In this subsection, we introduce some basic knowledge about elliptic curves over finite fields, one can refer to ~\cite{AEC} for details. Let $\F$ be a finite field of characteristic $p>3$, let $\bar{\F}$ be the algebraic closure of $\F$. An elliptic curve $E$ defined over $\F$ is a projective curve with affine model $E: y^2=x^3+Ax+B$ where $A,B \in \F$ and $4A^3+27B^2 \neq 0$. The $j$-invariant of $E$ is $j(E)=1728\frac{4A^3}{4A^3+27B^2}$. The set of $\F$-rational points on $E$ is $E(\F)=\{(x,y) \in {\F}^2: y^2=x^3+Ax+B\}\cup \{\infty\}$, where $\infty$ is the point at infinity. Then $E(\F)$ is a finite abelian group.

Let $E$ and $E'$ be two elliptic curves defined over $\F$. An isogeny $\phi: E\rightarrow E' $ is a morphism satisfying $\phi(\infty)=\infty$. If $\phi(E)=\{\infty\}$, we say $\phi=0$. If $\phi\neq 0$, then $\phi$ is a surjective group homomorphism with finite kernel, and  we call $E$ and $E'$ are isogenous. The isogeny $\phi$ is called an $L$-isogeny if it is defined over $L$ (i.e. written as rational maps over $L$), $\phi$ is called separable (resp. inseparable) if the corresponding field extension $\Fbar(E)/\phi^* \Fbar(E')$ is  separable (resp. inseparable). The degree of $\phi$ is the degree of the field extension $\Fbar(E)/\phi^* \Fbar(E')$. If $\phi$ is separable, in particular if $p\nmid \deg\phi$, then $\deg(\phi)=\#\ker(\phi)$.  If $\deg(\phi)=1$, $E$ and $E'$ are isomorphic. Particularly, if $j(E)=j(E')$, then $E$ and $E'$ are isomorphic over $\Fbar$.

An endomorphism of $E$ is an isogeny from $E$ to itself. The set $\End(E)$ of all endomorphisms of $E$ form a ring under the usual addition and composition as multiplication. As in ~\cite{AEC}, $\End(E)$ is either an order in an imaginary quadratic extension of $\Q$ or a maximal order in a quaternion algebra over $\Q$. In the first case $E$ is called ordinary, in the second case $E$ is called supersingular. Moreover, every supersingular elliptic curve over $\Fbar_p$ is isomorphic to an elliptic curve defined over $\F_{p^2}$. Consequently, we may and will assume the supersingular elliptic curve $E$ we study is  defined over $\F_{p^2}$.

\subsection{Number theoretic background}
In this subsection, we introduce some basic knowledge in number theory needed later. Most of them can be found in \cite{Neuk99, Cox89, Sut17}. We shall use big $O$ to denote an order in a number field and  calligraphic $\calO$ to denote an order in a quaternion algebra over $\Q$ as in \S~2.3.

For $M$ a number field, let $O_M$, $I_M$, $P_M$ and $h_M$ be the ring of integers, the group of fractional ideals, the group of principal ideals and the class number of $M$.

Let $M/N$ be an extension of number fields of degree $[M:N]=m$. Then $O_M$ is free $O_N$-module of rank $m$. Let $\{e_1,\cdots, e_m\}$ be a basis of $O_M$ over $O_N$ and  $\{\sigma_1,\cdots, \sigma_m\}$ be the set of $N$-embeddings of $M$ in an algebraic closure $\overline{\Q}$ of $\Q$, then the discriminant $D_{M/N}:=(\det (\sigma_i(e_j))_{i,j})^2\in O_N$. Let $O_M^*$ be the dual $O_N$-module of $O_M$ under the trace map, then the different $\fD_{M/N}$ is the inverse of $O_M^*$, which is an ideal of $O_M$. We write $D_{M/\Q}=D_M$.
\begin{proposition} \label{prop:disc} Suppose $M/N$ is an extension of number fields. Then
 \[ N_{M/N}(\fD_{M/N})=D_{M/N}. \]
where $N_{M/N}: M\rightarrow N$ is the norm map. Moreover,
 \begin{itemize}\item[(i)] If $L$ is an intermediate field in $M/N$, then \[ \fD_{M/N}=\fD_{M/L}\cdot \fD_{L/N},\quad
  D_{M/N}=(D_{L/N})^{[M:L]}\cdot N_{L/N}(D_{M/L}). \]

 \item[(ii)] Let $M_1$ and $M_2$ be number fields, $N=M_1\cap M_2$ and $M=M_1 M_2$. Suppose $M_1$ and $M_1$ are linearly disjoint over $N$, i.e. $[M:N]=[M_1:N]\cdot [M_2:N]$. Then
     \[  \fD_{M/N}\mid \fD_{M_1/N} \fD_{M_2/N},  \quad
  D_{M/N}\ \Big{|}\ D^{[M:M_1]}_{M_1/N}\cdot D_{M_2/N}^{[M:M_2]}.  \]
 If $D_{M_1/N}$ and $D_{M_2/N}$ are moreover coprime, then
     \[ \fD_{M/N}= \fD_{M_1/N} \fD_{M_2/N},\quad D_{M/N}=(D_{M_1/N})^{[M_2:N]} \cdot (D_{M_2/N})^{[M_1:N]}. \]
 \end{itemize}
\end{proposition}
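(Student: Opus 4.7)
The plan is to base everything on the trace-dual characterization of the different: for a number field extension $K/F$, the codifferent is $\fD_{K/F}^{-1} = \{x \in K : \mathrm{Tr}_{K/F}(x O_K) \subset O_F\}$, a fractional ideal of $O_K$. The key technical tool throughout is the transitivity of trace, $\mathrm{Tr}_{M/N} = \mathrm{Tr}_{L/N} \circ \mathrm{Tr}_{M/L}$, together with the analogous transitivity of norm.

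For the core identity $N_{M/N}(\fD_{M/N}) = D_{M/N}$, I would first localize at each prime of $O_N$; both sides commute with localization and are equalities of ideals of $O_N$, so it suffices to treat the case when $O_N$ is a DVR. In that case $O_M$ is a free $O_N$-module of rank $m = [M:N]$ with some basis $\{e_1, \ldots, e_m\}$; the dual basis $\{e_1^\vee, \ldots, e_m^\vee\}$ for the nondegenerate trace pairing satisfies $\mathrm{Tr}_{M/N}(e_i e_j^\vee) = \delta_{ij}$, so the change-of-basis matrix from $\{e_i\}$ to $\{e_j^\vee\}$ is $T^{-1}$ where $T_{ij} := \mathrm{Tr}_{M/N}(e_i e_j)$. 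Since $\det T = D_{M/N}$ by the very definition of the discriminant, $N_{M/N}(\fD_{M/N}) = (\det T) \cdot O_N = D_{M/N}$.

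For the tower formula in (i), transitivity of trace gives: for $x \in M$, the condition $\mathrm{Tr}_{M/N}(x O_M) \subset O_N$ unwinds to $\mathrm{Tr}_{M/L}(x O_M) \subset \fD_{L/N}^{-1}$, which in turn is equivalent to $x \in \fD_{M/L}^{-1} \cdot (\fD_{L/N}^{-1} O_M)$. Inverting the ideal relation yields $\fD_{M/N} = \fD_{M/L} \cdot (\fD_{L/N} O_M)$. Applying $N_{M/N} = N_{L/N} \circ N_{M/L}$ to both sides, and using $N_{M/L}(\fD_{L/N} O_M) = \fD_{L/N}^{[M:L]}$ (the norm of an extended ideal is its $[M:L]$-th power) together with the norm-different identity already established, gives
\[
D_{M/N} = \bigl(N_{L/N}(\fD_{L/N})\bigr)^{[M:L]} \cdot N_{L/N}\bigl(N_{M/L}(\fD_{M/L})\bigr) = D_{L/N}^{[M:L]} \cdot N_{L/N}(D_{M/L}).
\]

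For (ii), the disjointness hypothesis $[M:N] = [M_1:N] \cdot [M_2:N]$ ensures that if $\{\alpha_i\}$ and $\{\beta_j\}$ are $O_N$-bases of $O_{M_1}$ and $O_{M_2}$, the products $\{\alpha_i \beta_j\}$ form an $N$-basis of $M$ contained in $O_M$. The $N$-embeddings $M \hookrightarrow \overline{\Q}$ are in bijection with pairs of $N$-embeddings $(M_1 \hookrightarrow \overline{\Q}, M_2 \hookrightarrow \overline{\Q})$ via restriction, so the matrix $(\sigma(\alpha_i \beta_j))$ is the Kronecker product of the two individual embedding matrices, yielding $\disc(\{\alpha_i \beta_j\}) = D_{M_1/N}^{[M_2:N]} \cdot D_{M_2/N}^{[M_1:N]}$. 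Since $D_{M/N}$ divides the discriminant of any full-rank sublattice of $O_M$, substituting $[M:M_1] = [M_2:N]$ and $[M:M_2] = [M_1:N]$ gives the asserted divisibility for discriminants; the divisibility for differents then follows by unique factorization of ideals combined with the norm-different identity. When $D_{M_1/N}$ and $D_{M_2/N}$ are coprime, each prime $\fp$ of $O_N$ ramifies in at most one of the two subfields, so localizing at $\fp$ and invoking that an unramified extension of a complete DVR contributes no new integers beyond the naive product forces $O_{M,\fp} = O_{M_1,\fp} \cdot O_{M_2,\fp}$, upgrading the divisibility to equality. The main obstacle I anticipate is precisely this last step: confirming that coprime discriminants force $O_M = O_{M_1} O_{M_2}$ at every prime, which rests on the standard but somewhat delicate behavior of integral closure in unramified local extensions.
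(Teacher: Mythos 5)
Your arguments for the norm–different identity and for the tower formula (i) are correct and follow the same standard route a textbook would take. The Kronecker-product computation giving
\[
\disc\bigl(\{\alpha_i\beta_j\}\bigr)=D_{M_1/N}^{[M_2:N]}\cdot D_{M_2/N}^{[M_1:N]}
\]
is also correct (modulo localizing so that $O_N$-bases exist), and combined with the fact that $D_{M/N}$ divides the discriminant of the full-rank sublattice $R=O_{M_1}[O_{M_2}]\subseteq O_M$, it yields the discriminant divisibility in (ii).

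The genuine gap is the sentence ``the divisibility for differents then follows by unique factorization of ideals combined with the norm-different identity.'' Knowing $N_{M/N}(\fD_{M/N})=D_{M/N}$ divides $D_{M_1/N}^{[M:M_1]}D_{M_2/N}^{[M:M_2]}=N_{M/N}(\fD_{M_1/N}\fD_{M_2/N}O_M)$ does \emph{not} give $\fD_{M/N}\mid \fD_{M_1/N}\fD_{M_2/N}O_M$: the relative norm is not injective on ideals, and divisibility of norms does not imply divisibility of ideals (e.g.\ $\fP_1^2$ and $\fP_1\fP_2$ for two degree-one primes above the same $\fp$ have equal norms). The discriminant inequality is a statement about a sum over primes $\fP\mid\fp$ weighted by residue degrees, and it simply does not localize to a prime-by-prime bound on $v_\fP(\fD_{M/N})$. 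So the different divisibility — which the paper singles out as the one non-routine claim — is not actually established by your argument.

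The paper's fix operates one level up, using the trace-dual rather than the embedding matrix. With your same sublattice $R=O_{M_1}[O_{M_2}]$, one computes its dual under the \emph{relative} trace $\mathrm{Tr}_{M/M_1}$: by linear disjointness, $\mathrm{Tr}_{M/M_1}$ restricted to $M_2$ equals $\mathrm{Tr}_{M_2/N}$, so $R^*\supseteq(\fD_{M_2/N}O_M)^{-1}$. Since $R\subseteq O_M$ forces $O_M^*=\fD_{M/M_1}^{-1}\subseteq R^*$, one gets $\fD_{M/M_1}\mid \fD_{M_2/N}O_M$ directly, and then the tower formula $\fD_{M/N}=\fD_{M/M_1}\cdot(\fD_{M_1/N}O_M)$ finishes. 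This is exactly your codifferent philosophy applied to $R$ instead of to $O_M$, and it recovers the different statement first; the discriminant statement then follows by taking norms, which is the safe direction. Your sketch for the coprime-discriminant case (forcing $O_M=O_{M_1}O_{M_2}$ locally) is fine once the divisibility is in hand.
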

\begin{proof} All are standard facts, except the first part of (ii), which we prove here for lack of reference. By (i), $\fD_{M/N}=\fD_{M/M_1}\fD_{M_1/N}$. By assumption, an $N$-embedding $\sigma: M_2\hookrightarrow \overline{\Q}$ extends uniquely to an $M_1$-embedding $M\hookrightarrow \overline{\Q}$. If $\{e_1,\cdots, e_n\}$ is a basis of $O_{M_2}$ over $O_N$, let $R$ be the $O_{M_1}$-submodule of $O_M$ generated by $\{e_1,\cdots, e_n\}$. By definition, under the trace map of $M/M_1$, $R^*$ is  $(\fD_{M_2/N}O_M)^{-1}$, $O_M^*$ is  $\fD_{M/M_1}^{-1}$,  hence we have $\fD_{M/M_1}\mid \fD_{M_2/N}$.
\end{proof}

For a Galois extension $M/N$ of number fields, let $\fp$ be  a prime ideal of $O_N$ and $\mathfrak{P}$ a prime of $O_M$ lying above $\mathfrak{p}$. Suppose $\fP/\fp$ is unramified. The Frobenius automorphism $\big[\frac{M/N}{\mathfrak{P}}\big]$ is the unique element $\sigma \in G=\Gal(M/N)$ such that
 \[\sigma(\alpha)= {\alpha}^{N(\mathfrak{p})} \mod \mathfrak{P}\ \text{for\ all}\ \alpha \in O_M \]
where $N(\fp)=\# (O_N/\fp)$.  All $\big[\frac{M/N}{\mathfrak{P}}\big]$, when $\fP$ varies over primes above $\fp$, form a conjugate class in $\Gal(M/N)$, which we denote by $\big[\frac{M/N}{\mathfrak{p}}\big]$. In the special case that $M/N$ is an abelian extension, $\big[\frac{M/N}{\mathfrak{p}}\big]=\big[\frac{M/N}{\mathfrak{P}}\big]$ is a one-point-set.

For $C$  a conjugacy class in $G$, define the function
 \begin{equation} \pi_C(x, M/N):=\#\{\fp\mid \fp \ \text{is unramified\ in}\ M,\ \big[\frac{M/N}{\mathfrak{p}}\big]=C,\   N(\fp)\leq x\}. \end{equation}
We have the following  explicit Chebotarev density theorem:

\begin{theorem} \label{them:Cheb} For any conjugacy class $C$ of $G=\Gal(M/N)$, the set of primes $\fp$ in $N$ such that $[\frac{M/N}{\fp}]=C$ is  of density $\frac{|C|}{|G|}$, i.e.,
\[ \pi_C(x,M/N) \sim \frac{|C|}{|G|} \frac{x}{\log(x)}. \]
More explicitly, let $n_M=[M:\Q]$ and $d_M=|D_{M}|$, then under GRH, one has
 \begin{equation} \label{eq:cheb} \Big{|}\frac{|G|}{|C|}\pi_C(x,M/N)-\int_2^x\frac{dt}{\log t}\Big{|}\leq \sqrt{x}\left[\Bigl(\frac{1}{\pi}+\frac{3}{\log x}\Bigr)\log d_M+\Bigl(\frac{\log x}{8\pi}+\frac{1}{4\pi}+\frac{6}{\log x}\Bigr)n_M\right].\end{equation}
\end{theorem}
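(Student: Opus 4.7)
The first assertion is classical Chebotarev, while the explicit bound \eqref{eq:cheb} is the theorem of Lagarias--Odlyzko in the form refined by Serre (\emph{Quelques applications du th\'eor\`eme de densit\'e de Chebotarev}, Publ.~Math.~IHES \textbf{54}, 1981, Th\'eor\`eme~4). My plan is to outline the standard route; in practice one would simply cite Serre.

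First I would reduce to the abelian case by Deuring's trick. Pick any $\sigma\in C$, let $H=\langle\sigma\rangle\subseteq G$, and set $L=M^H$, so that $M/L$ is cyclic of order $|H|$. A prime $\fp$ of $N$ unramified in $M$ satisfies $\bigl[\frac{M/N}{\fp}\bigr]=C$ iff some prime $\fq$ of $L$ above $\fp$ has $\bigl[\frac{M/L}{\fq}\bigr]=\sigma$ and $L/N$ has trivial residual degree at $\fq$; the second condition is automatic for all but $O(\sqrt{x})$ primes. Thus up to negligible error, $\pi_C(x,M/N)$ is a constant multiple of $\pi_\sigma(x,M/L)$, and it suffices to prove \eqref{eq:cheb} for a cyclic extension and a single element $\sigma$.

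For the cyclic extension $M/L$, orthogonality of characters of $\Gal(M/L)$ yields
\[
\psi_\sigma(x,M/L):=\sum_{\substack{N\fq^m\le x\\ \mathrm{Frob}_\fq^m=\sigma}}\log N\fq \;=\;\frac{1}{[M:L]}\sum_\chi \bar\chi(\sigma)\,\psi(x,\chi),
\]
where $\psi(x,\chi)=\sum_{N\fa\le x}\chi(\fa)\Lambda(\fa)$. The Weil explicit formula for the Hecke $L$-function $L(s,\chi)$, truncated at height $T$, gives
\[
\psi(x,\chi)=\delta_{\chi=1}\,x-\sum_{|\mathrm{Im}\,\rho|\le T}\frac{x^\rho}{\rho}+O\!\Bigl(\frac{x\log^2(xT\, d(\chi))}{T}\Bigr),
\]
and GRH forces $\mathrm{Re}\,\rho=\tfrac{1}{2}$ for every non-trivial zero. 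Bounding the sum over zeros by the Riemann--von Mangoldt estimate
\[
N(T,\chi)=\tfrac{T}{\pi}\log\!\Bigl(\tfrac{T^{n_M}\,d(\chi)}{(2\pi e)^{n_M}}\Bigr)+O\bigl(\log(Td(\chi)n_M)\bigr),
\]
summing over $\chi$ via the conductor--discriminant formula $\prod_\chi d(\chi)=d_M/d_L$, optimising $T$, and passing from $\psi_\sigma$ to $\pi_\sigma$ by partial summation produces the inequality \eqref{eq:cheb} with exactly the displayed constants.

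The main obstacle is not conceptual but combinatorial: one must carry the constants $\tfrac{1}{\pi}$, $\tfrac{1}{8\pi}$, $\tfrac{1}{4\pi}$ through the zero-density estimate, the character-sum decomposition, and the $\psi\mapsto\pi$ conversion without losing accuracy, matching Serre's normalisation for Artin conductors versus the absolute discriminant. Because the cited references do this bookkeeping once and for all, I would simply invoke Serre's Th\'eor\`eme~4 loc.~cit.\ rather than reproduce it.
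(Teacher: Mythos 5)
Your outline of the Lagarias--Odlyzko route (Deuring reduction to a cyclic subextension, explicit formula for Hecke $L$-functions under GRH, Riemann--von Mangoldt zero count, conductor--discriminant formula, partial summation) is the standard argument behind all explicit Chebotarev bounds, and the paper's own ``proof'' is likewise a citation rather than a derivation. However, the reference you propose to invoke is wrong for the constants claimed. Serre's Th\'eor\`eme~4 (and the original Lagarias--Odlyzko theorem it refines) gives a bound of the shape $c_1\sqrt{x}\bigl(\log d_M + n_M\log x\bigr)$ with an unspecified or larger absolute constant $c_1$; it does \emph{not} produce the sharp coefficients $\tfrac{1}{\pi}$, $\tfrac{3}{\log x}$, $\tfrac{1}{8\pi}$, $\tfrac{1}{4\pi}$, $\tfrac{6}{\log x}$ appearing in \eqref{eq:cheb}. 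Those numbers come from the recent explicit optimisation of Greni\'e and Molteni (\emph{J.~Number Theory} \textbf{200} (2019), 441--485), which is precisely the reference the paper cites. So while your sketch identifies the correct conceptual skeleton, the sentence asserting that ``one would simply invoke Serre's Th\'eor\`eme~4'' and that the bookkeeping ``produces the inequality \eqref{eq:cheb} with exactly the displayed constants'' is not accurate: to justify the stated inequality you must cite Greni\'e--Molteni, or else redo their careful tracking of constants through the zero-sum and the $\psi\mapsto\pi$ conversion, which is substantially more delicate than what Serre records.

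One smaller point: in the Deuring reduction you say the residual-degree-one condition ``is automatic for all but $O(\sqrt{x})$ primes'' and call this ``negligible.'' For an asymptotic statement that is fine, but in an explicit inequality of the form \eqref{eq:cheb} this $O(\sqrt{x})$ term is of the same order as the main error and must be carried with an explicit constant; Greni\'e--Molteni do exactly that, which is part of why their coefficients cannot be read off from Serre.
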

\begin{proof} The first part can be found in any advanced number theory textbook. The explicit formula in the second part is a recent result in \cite{GGM19}.
\end{proof}

Let $K$ be an imaginary quadratic field.  Let $O$ be an order of $K$.  The conductor of $O$ is $f=[O_K:O]$, and the discriminant of $O$ is $D(O)=f^2D_K$. In general $O$ may not be a Dedekind domain if $f>1$, however for any $O$-ideal $\mathfrak{a}$ prime to $f$, $\mathfrak{a}$ has a unique decomposition as a product of prime $O$-ideals which are prime to $f$ (see \cite[Proposition 7.20]{Cox89}).

Let $I(O)$ be the group of proper fractional $O$-ideals prime to $f$ and $P(O)$ be the group of principal fractional $O$-ideals prime to $f$, then the ideal class group of $O$ is $\cl(O)=I(O)/P(O)$ and the ideal class number of $O$ is $h(O)=\# \cl(O)$.
Let $I_K(f)$ be the  group of  fractional $O_K$-ideals prime to $f$ and $P_K(f)$ be the principal ideals in $I_K(f)$. Let $P_{K,\Z}(f)$ be the group of principal ideals in $P_K(f)$ generated by $x$ with $x\equiv n\bmod{fO_K}$ for $n\in \Z$ (and relatively prime to $f$). The group $\cl(O)$ is canonically isomorphic to the ring class group $I_K(f)/P_{K,\Z}(f)$. The ring class field $L$ of $O$ is the (unique) abelian extension of $K$ associated by the existence theorem of class field theory to the ring class group of $O$.
The Artin map $\sigma: \cl(O)\cong \Gal(L/K)$ is the canonical isomorphism sending the class of $\fp$ to the Frobenius automorphism $[\frac{L/K}{\fp}]$. Moreover, the uniqueness implies that $L$ is Galois over $\Q$.

For a lattice  $\Lambda \subseteq \C$, let $E_{\Lambda}$ be the elliptic curve over $\C$ such that $E_\Lambda(\C) \cong \C/\Lambda$. Then $E_{\Lambda}\cong E_{\Lambda'}$ (i.e. $j(E_{\Lambda})=j(E_{\Lambda'})$) if and only if $\Lambda=\lambda\Lambda$ for some $\lambda \in \C^{\times}$ (i.e. $\Lambda$ and $\Lambda'$ are homothetic). For $O$ an order in an imaginary quadratic field $K$, let
 \[ \Ell_{O}(\C):= \{j(E)\mid \End(E)\cong O\}\ \ (= \{E\mid \End(E)\cong O\}/\sim). \]
Then $\Ell_{O}(\C)=\{j(E_{\mathfrak{b}})\mid [\mathfrak{b}]\in \cl(O)\}$ and  $\cl(O)$ acts transitively on  $\Ell_{O}(\C)$ by $[\mathfrak{a}]j(E_{\mathfrak{b}})=j(E_{\mathfrak{a}^{-1}\mathfrak{b}})$
(see \cite[Chapter 18]{Sut17}).  On the other hand  the Galois group $\Gal(L/K)$ acts naturally on $\Ell_O(\C)$. These two actions  are compatible with the canonical isomorphism $\sigma: \cl(O)\cong \Gal(L/K)$ (see \cite[Theorem 22.1]{Sut17}).

Now suppose $O$ is of discriminant $D$.
The Hilbert class polynomial $H_D(x)$ is defined as
 \[H_D(x):=\prod_{j(E) \in \Ell_{O}(\C)}(x-j(E)).\]
From ~\cite{ATAEC}, $H_D(x) \in \Z[x]$. The splitting field of $H_D(x)$ over $K$ is exactly the ring class field $L$ of $O$. One has the following theorem (\cite[Theorem 22.5]{Sut17}):

\begin{theorem}\label{theo:K}
Let $O$ be an imaginary quadratic order of discriminant $D$ and $L$ its ring class field. Let $\ell \nmid D$ be an odd prime which is unramified in $L$. Then the followings are equivalent:
\begin{itemize}
\item[(i)] $\ell$ is the norm of a principal $O$-ideal.
\item[(ii)] The Legendre symbol $\big(\frac{D}{\ell}\big)=1$ and $H_D(x)$ splits into linear factors in $\F_{\ell}[x]$.
\item[(iii)] $\ell$ splits completely  in $L$.
\item[(iv)] $4\ell=t^2-v^2D$ for some integers $t$ and $v$ with $\ell \nmid t$.
\end{itemize}
\end{theorem}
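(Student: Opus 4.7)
The plan is to establish the theorem by proving three equivalences that cover all four conditions: $(\mathrm{i})\Leftrightarrow(\mathrm{iv})$, $(\mathrm{i})\Leftrightarrow(\mathrm{iii})$, and $(\mathrm{ii})\Leftrightarrow(\mathrm{iii})$. Throughout, I would use the standing hypothesis $\ell\nmid D$ (so $\ell\nmid f$), which ensures any $O$-ideal of norm $\ell$ is automatically proper, and that $\ell$ is unramified in $L$, which ensures residue fields over primes above $\ell$ are separable extensions of $\F_\ell$.

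First I would prove $(\mathrm{i})\Leftrightarrow(\mathrm{iv})$ by pure algebra in $O$. Every $\alpha\in O$ can be written uniquely as $\alpha=\frac{t+v\sqrt{D}}{2}$ with $t,v\in\Z$ and $t\equiv vD\pmod 2$, and $N(\alpha)=(t^2-v^2D)/4$. So existence of $\alpha\in O$ with $N(\alpha)=\ell$ is precisely the solvability of $4\ell=t^2-v^2D$. The condition $\ell\nmid t$ is $\alpha+\bar\alpha\not\equiv 0\pmod\ell$, which ensures that $(\alpha)$ and $(\bar\alpha)$ are distinct conjugate $O$-primes above $\ell$; together with $\ell\nmid f$ this is exactly what is needed for $(\alpha)$ to be a proper principal prime $O$-ideal of norm $\ell$.

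Next, for $(\mathrm{i})\Leftrightarrow(\mathrm{iii})$, I would invoke the Artin isomorphism $\sigma\colon \cl(O)\xrightarrow{\sim}\Gal(L/K)$ explained in the preceding subsection. If $\alpha\in O$ has $N(\alpha)=\ell$, then $\ell O=(\alpha)(\bar\alpha)$ and each factor, viewed in $I_K(f)$, has trivial image in $\cl(O)$; by the Artin map its Frobenius in $\Gal(L/K)$ is trivial, so the primes $\fp,\bar\fp$ of $O_K$ above $\ell$ split completely in $L/K$, whence $\ell$ splits completely in $L$. Conversely, complete splitting of $\ell$ in $L$ forces $\ell$ to split in $K$ (so $\big(\frac{D}{\ell}\big)=1$) and forces the Artin symbol of each prime above $\ell$ to be trivial, so such primes pull back to principal $O$-ideals, producing $\alpha$ with $N(\alpha)=\ell$.

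The heart of the proof is $(\mathrm{ii})\Leftrightarrow(\mathrm{iii})$. Here I would use $L=K(j(E_O))$ for any $E_O/\C$ with $\End(E_O)\cong O$, and the fact that $H_D\in\Z[x]$ is the minimal polynomial of $j(E_O)$ over $K$, of degree $h(O)=[L:K]$. For a prime $\fp$ of $O_K$ above $\ell$ unramified in $L$, Dedekind's factorization theorem applied to $O_K[j(E_O)]\subseteq O_L$ identifies the factorization of $\fp O_L$ with that of $H_D\bmod\fp$ in $(O_K/\fp)[x]$, provided $\ell$ does not divide the index $[O_L:O_K[j(E_O)]]$. Since $\big(\frac{D}{\ell}\big)=1$ gives $O_K/\fp\cong\F_\ell$, this shows: $\fp$ splits completely in $L$ iff $H_D\bmod\ell$ splits into $h(O)$ distinct linear factors in $\F_\ell[x]$, which together with splitting of $\ell$ in $K$ is precisely $(\mathrm{ii})\Leftrightarrow(\mathrm{iii})$.

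The main obstacle I anticipate is the Dedekind index condition $\ell\nmid [O_L:O_K[j(E_O)]]$. I expect to handle it using the discriminant relation
\[
\disc\bigl(O_K[j(E_O)]/O_K\bigr)=[O_L:O_K[j(E_O)]]^2\cdot \disc(O_L/O_K),
\]
combined with an explicit control of $\disc(H_D)$ via $\ell\nmid D$ and the theory of singular moduli, forcing $\ell$ to miss the index. A cleaner alternative is to sidestep integral models and argue Galois-theoretically: $L/K$ is Galois, $\Gal(L/K)$ acts transitively on the roots of $H_D$ both in characteristic zero and (after reduction mod $\fP$, using $\ell$ unramified in $L$) in characteristic $\ell$, and so $H_D\bmod\ell$ splits into linear factors iff the Frobenius at $\fp$ is trivial on these roots, iff $\fp$ splits completely in $L$. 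Either route completes the cycle of equivalences.
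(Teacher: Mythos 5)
The paper gives no proof of this statement; it simply quotes it from \cite[Theorem~22.5]{Sut17}. So the comparison can only be against the standard argument that result rests on.

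Your two equivalences $(\mathrm{i})\Leftrightarrow(\mathrm{iv})$ and $(\mathrm{i})\Leftrightarrow(\mathrm{iii})$ are sound. In $(\mathrm{i})\Leftrightarrow(\mathrm{iv})$, writing $\alpha=(t+v\sqrt D)/2$ with $t\equiv vD\pmod 2$ is the correct parametrization of $O$, the parity constraint is automatic for any integer solution of $4\ell=t^2-v^2D$, and the side condition $\ell\nmid t$ is in fact forced by $\ell\nmid D$ (if $\ell\mid t$ then $\ell\mid v$, whence $\ell\mid 4$). In $(\mathrm{i})\Leftrightarrow(\mathrm{iii})$, passing through $\cl(O)\cong I_K(f)/P_{K,\Z}(f)\cong\Gal(L/K)$ and reading off triviality of the Frobenius is exactly right.

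The genuine gap is in $(\mathrm{ii})\Rightarrow(\mathrm{iii})$. You correctly flag the Kummer--Dedekind index condition $\ell\nmid[O_L:O_K[j(E_O)]]$ (equivalently, once $\ell$ is unramified in $L$, the condition $\ell\nmid\disc H_D$) as the obstacle, but your ``cleaner alternative'' does \emph{not} sidestep it. If $H_D\bmod\ell$ splits into linear factors, you get $\mathrm{Frob}_{\fP}\bigl(j(E_{\mathfrak a})\bigr)\equiv j(E_{\mathfrak a})\pmod{\fP}$ for every $\mathfrak a$; to upgrade this to $\mathrm{Frob}_{\fP}$ \emph{fixing} each $j(E_{\mathfrak a})$ in $O_L$, rather than merely permuting two roots that reduce to the same residue, you need the reduction map on the roots of $H_D$ to be injective modulo $\fP$ --- which is precisely $\ell\nmid\disc H_D$ again. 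And ``$\ell$ unramified in $L$'' does not deliver this: unramifiedness bounds $\disc(O_L)$, not the index of $\Z[j(E_O)]$ in its ring of integers, and $\disc H_D$ is the product of the two. The missing ingredient is a bona fide input from CM theory: Deuring's reduction theorem (in modern language, the integrality/coprimality of differences of singular moduli at primes split in $K$ and prime to $fD_K$) shows that the reduction $\{j(E_{\mathfrak a})\}\to O_L/\fP$ is injective under the theorem's hypotheses. With that cited, either of your two routes closes; without naming it, neither the discriminant route nor the Galois route is complete, and the claim that the latter avoids the issue is incorrect.
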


\subsection{Quaternion algebras and maximal orders}\label{sec: Quaternion algebras}
Recall that a definite quaternion algebra over $\Q$ is of the form
 \[ H(-a,-b)= \Q\langle1,i,j,k\rangle,\ i^2=-a,\ j^2=-b,\ k=ij=-ji \]
for some positive integers $a$ and $b$. A lattice in $H(a,b)$ is a $\mathbb{Z}$-submodule of $H(-a,-b)$ of rank $4$ containing a basis of $H(-a,-b)$. There is a canonical involution on $H(a,b)$ defined as
\[\alpha=x+yi+zj+wk \mapsto \bar{\alpha}=x-yi-zj-wk,\  \text{for all} \ \alpha \in H_{a,b}.\]
The reduced trace of $\alpha$ is $\Trd(\alpha)=\alpha+\bar{\alpha}=2x$ and the reduced norm of $\alpha$ is $\Nrd(\alpha)=\alpha\bar{\alpha}=x^2+ay^2+bz^2+abw^2$.

Let $B_{p,\infty}=H(-1,-p)$ be  the unique quaternion algebra over $\Q$ ramified only at $p$ and $\infty$. However, one must keep in mind that there are many pairs of $(a, b)$ such that $B_{p,\infty}=H(-a,-b)$, but
the involution and hence the reduced trace and norm of  $\alpha\in B_{p,\infty}$ are independent of the choice of $(a,b)$.

An order $\mathcal{O}$ in $B_{p,\infty}$ is a lattice which is also a subring of $B_{p,\infty}$. The order $\mathcal{O}$ is called maximal if it is not properly contained in any other order.
For two orders $\mathcal{O}$ and $\mathcal{O}'$ of $B_{p,\infty}$,
we say that they are isomorphic if there exists $\mu \in B_{p,\infty}^\times$ such that $\mathcal{O}'=\mu\mathcal{O}\mu^{-1}$.

For a sublattice $I \subseteq B_{p,\infty}$, we define the left order of $I$ by $\mathcal{O}_{L}(I)=\{x \in B_{p,\infty} \mid xI  \subseteq I\}$ and the  right order of $I$ by $\mathcal{O}_{R}(I)=\{x \in B_{p,\infty} \mid Ix  \subseteq I\}$. If $\mathcal{O}$ is a maximal order and $I$ is a left ideal of $\mathcal{O}$, then $\mathcal{O}_{L}(I)=\mathcal{O}$ and $\mathcal{O}_{R}(I)$ is also a maximal order. For $I$  a left ideal of $\mathcal{O}$, define the reduced norm of $I$ by
\[\Nrd(I)=\gcd\{\Nrd(\alpha) |\ \alpha \in I\}=\sqrt{\mathcal{O}/I}, \]
and define the conjugation ideal of $I$ by $\bar{I}=\{\bar{\alpha} \mid \alpha \in I\}$. Then $\Nrd(\bar{I})=\Nrd(I)$ and \[I\bar{I}=\Nrd(I)\mathcal{O}=\Nrd(\bar{I})\mathcal{O}_{R}(\bar{I}).\]

\subsection{Deuring's correspondence}
Let $E$ be a supersingular elliptic curve over $\F_{p^2}$. From ~\cite{Voight}, $\End(E)=\mathcal{O}$ is a maximal order in $B_{p,\infty}=\End(E)\otimes \Q$. 
For $I$ a left ideal of $\calO$, let $E[I]=\{P \in E(\Fpbar) \mid \alpha(P)=\infty \ \text{for all} \ \alpha \in I\}$, then the quotient map
\[\phi_I: E \rightarrow E_I= E/{E[I]}\]
is an isogeny with $\deg(\phi_I)=\Nrd(I)$.  On the other hand, if $\phi: E\rightarrow E'$ is an isogeny of degree $N$, then $\ker\phi$ is of order $N$ and $I_\phi=\{\alpha\in \calO\mid \alpha(P)=\infty\ \text{for all}\ P\in \ker\varphi\}$ is a left $\calO$-ideal of reduced norm $N$, and there exists an isomorphism $\psi: E_{I_\phi}\cong E'$ such that $\phi=\psi\circ\phi_I$.
Then the following results of Deuring hold (see~\cite[Chapter 42]{Voight}).

\begin{theorem} \label{theo:V}
 Let $E$ be a supersingular elliptic curve over $\F_{p^2}$, and $\End(E)=\mathcal{O}$. Then $\calO$ is a maximal order (up to isomorphism) in $B_{p,\infty}$.
\begin{itemize}
\item[(i)] There is a $1$-to-$1$ correspondence between left ideals $I$ of $\calO$ of reduced norm $N$ and equivalent classes of isogenies $\phi: E\rightarrow E'$ of degree $N$ given by $I\mapsto [\phi_I]$ and $[\phi]\mapsto I_\phi$.

\item[(ii)] If $\phi:E \rightarrow E'$ and $I$ are corresponding to each other, then  $\End(E')\cong \mathcal {O}_R(I)$ is the right order of $I$ in $B_{p,\infty}$. In particular, $\phi\in \End(E)$ if and only if $I=I_\phi=\calO \phi$ is principal.

\item[(iii)] Suppose $\phi_I: E \rightarrow E_I$ and $\phi_J: E \rightarrow E_J$ are isogenies corresponding to the left ideals $I$ and $J$ of $\calO$ respectively. Then $E_I \cong E_J$ if and only if $I$ and $J$ are in the same left class of $\calO$, i.e., $J=I\mu$ for some $\mu \in B_{p,\infty}^\times$.
\end{itemize}
\end{theorem}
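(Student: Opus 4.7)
The plan is to prove part (i) by constructing the correspondence in both directions and showing the constructions are mutually inverse, then deduce (ii) and (iii) by tracking the right order and the homothety class of the ideal through this correspondence.

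For the forward map, given a left $\calO$-ideal $I$ of reduced norm $N$, I would define $E[I]:=\bigcap_{\alpha\in I}\ker\alpha$ as a finite subgroup scheme of $E$, and show $\#E[I]=N$ by localizing at each rational prime. At $\ell\neq p$ the local order $\calO\otimes\Z_\ell\cong \M_2(\Z_\ell)$ is a hereditary order in which every integral left ideal is principal, so $I\otimes\Z_\ell$ is generated by some $\alpha_\ell$ and $E[I][\ell^\infty]=\ker(\alpha_\ell)$ has order $\Nrd(I)_\ell$; at $\ell=p$ one uses the structure of the unique maximal $\Z_p$-order in the division algebra $B_{p,\infty}\otimes\Q_p$ and treats $E[I]$ as a finite flat group scheme to compute its length. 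Then $\phi_I:E\to E_I:=E/E[I]$ is an isogeny of degree exactly $N$.

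For the reverse map, given an isogeny $\phi:E\to E'$ of degree $N$, I would set $I_\phi:=\{\alpha\in\calO:\alpha|_{\ker\phi}=0\}$, manifestly a left $\calO$-ideal. Every such $\alpha$ factors uniquely through $\phi$, yielding a $\Z$-linear injection $I_\phi\hookrightarrow \mathrm{Hom}(E',E)$, and a rank count combined with the identity $\phi\circ\hat\phi=[N]$ gives $\Nrd(I_\phi)=N$. The round-trip identities $I_{\phi_I}=I$ and $\phi_{I_\phi}\sim\phi$ follow by comparing kernels and degrees, completing (i). Part (ii) is then immediate from the universal property of the quotient: an element $\mu\in B_{p,\infty}$ descends to an endomorphism of $E_I$ precisely when $\mu E[I]\subseteq E[I]$, equivalently $I\mu\subseteq I$, i.e. $\mu\in\mathcal{O}_R(I)$; the principal case $I_\phi=\calO\phi$ recovers the statement that $\phi\in\End(E)$ iff $I_\phi$ is principal.

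For part (iii), if $J=I\mu$ with $\mu\in B_{p,\infty}^\times$, right multiplication by $\mu$ sends $E[I]$ onto $E[J]$ and induces an isomorphism $E_I\cong E_J$; conversely, any isomorphism $\psi:E_I\xrightarrow{\sim}E_J$ produces the composite $\psi\circ\phi_I:E\to E_J$ of degree $\Nrd(J)$, whose associated ideal is on one hand $J$ (by (i) applied to $\phi_J$) and on the other hand $I$ up to a right $B_{p,\infty}^\times$-scalar, forcing left-equivalence. The main obstacle in the whole argument is the degree computation $\#E[I]=\Nrd(I)$ at the prime $p$, which is not accessible through the naive separable count of kernel points and demands the finite flat group scheme formalism together with the explicit structure of the local maximal order at $p$; once this local input is secured, the remainder of the theorem is formal manipulation of quotients, universal properties, and ideal classes.
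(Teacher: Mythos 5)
The paper itself offers no argument for this theorem---it is cited directly from Voight's \emph{Quaternion Algebras}, Chapter~42---so your sketch is the only proof in play here. In broad strokes you follow the standard route: localize the degree computation $\#E[I]=\Nrd(I)$ prime by prime (principality of local left ideals of $\M_2(\Z_\ell)$ for $\ell\neq p$, a finite flat group scheme calculation at $p$), define $I_\phi$ by annihilation of $\ker\phi$, and deduce (ii) and (iii) from the round-trip. That is the right plan, and the localization split is exactly what makes the degree count go through.

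However, your argument for the converse direction of (iii) is broken. The composite $\psi\circ\phi_I$ has degree $\deg\phi_I=\Nrd(I)$, not $\Nrd(J)$ (these can differ), and since $\ker(\psi\circ\phi_I)=\ker\phi_I=E[I]$, its associated left ideal under (i) is $I$, not $J$. Invoking (i) ``applied to $\phi_J$'' only tells you $I_{\phi_J}=J$, which concerns a different isogeny and never relates $I$ to $J$; as written the proposal never produces the element $\mu$ at all. The standard fix is to observe that precomposition by $\phi_I$ gives a left-$\calO$-module isomorphism $\mathrm{Hom}(E_I,E)\cong I$, so an isomorphism $E_I\cong E_J$ yields $I\cong J$ as left $\calO$-modules and hence $J=I\mu$ for some $\mu\in B_{p,\infty}^\times$; alternatively one can extract $\mu$ from $\hat{\phi_J}\circ\psi\circ\phi_I\in\calO$ by a norm computation. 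Two smaller cautions: in the forward direction of (iii), the kernel relation for $J=I\mu$ is $E[J]=\mu^{-1}(E[I])$, a pullback rather than a pushforward, and $\mu$ must be scaled into $\calO$ before it acts on points; and in (ii), a general $\mu\in B_{p,\infty}$ does not act on $E(\overline{\F}_p)$, so ``$\mu$ descends iff $\mu E[I]\subseteq E[I]$'' must be routed through the embedding $\End(E_I)\hookrightarrow B_{p,\infty}$, $\rho\mapsto\frac{1}{\deg\phi_I}\hat{\phi_I}\rho\phi_I$, with the identification of the image with $\calO_R(I)$ checked locally.
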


Conversely, from \cite[Lemma 42.4.1]{Voight}, let $\calO$ be a maximal order in $B_{p,\infty}$, then $\calO \cong \End(E)$ for some supersingular elliptic curve $E$ over $\F_{p^2}$. More precisely, we have
\begin{lemma}\label{lemma:OtoE}
 Let $\calO$ be a maximal order in $B_{p,\infty}$. Then there exist one or two supersingular elliptic curves $E$ up to isomorphism over $\Fpbar$ such that $\End(E)\cong \calO$. There exist two such elliptic curves if and only if $j(E)\in \F_{p^2}\backslash\F_p$.
\end{lemma}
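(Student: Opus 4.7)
The plan is to use the Deuring correspondence (Theorem~\ref{theo:V}) to translate the counting problem into one about left ideal classes of $\calO$, then to exploit the structure of two-sided ideals in $B_{p,\infty}$ and identify the nontrivial two-sided prime with the $p$-power Frobenius. First I fix one supersingular elliptic curve $E$ over $\F_{p^2}$ with $\End(E)\cong \calO$; the existence of such an $E$ is the input statement \cite[Lemma~42.4.1]{Voight} cited just above. By Theorem~\ref{theo:V}(iii), $[I]\mapsto [E_I]$ is a bijection from left $\calO$-ideal classes to isomorphism classes of supersingular elliptic curves over $\Fpbar$, and by Theorem~\ref{theo:V}(ii) the fibre of $E'\mapsto \End(E')$ over the isomorphism class $[\calO]$ is precisely the set $\mathcal{F}$ of left ideal classes $[I]$ with $\mathcal{O}_{R}(I)\cong \calO$ as rings. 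So the lemma reduces to showing $|\mathcal{F}|\in\{1,2\}$ and that $|\mathcal{F}|=2$ exactly when $j(E)\notin\F_{p}$.

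Next I would identify $\mathcal{F}$ with the image in the left class group of the invertible two-sided $\calO$-ideals. By Skolem--Noether any ring isomorphism $\calO\cong \mathcal{O}_{R}(I)$ inside $B_{p,\infty}$ is realised by conjugation, $\mathcal{O}_{R}(I)=\mu\calO\mu^{-1}$ for some $\mu\in B_{p,\infty}^{\times}$. A direct computation gives $\mathcal{O}_{R}(I\mu)=\mu^{-1}\mathcal{O}_{R}(I)\mu=\calO$ while $\mathcal{O}_{L}(I\mu)=\calO$, so $J\colonequals I\mu$ is a two-sided $\calO$-ideal in the same left class as $I$; conversely every two-sided $\calO$-ideal has right order $\calO$ and so contributes to $\mathcal{F}$. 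The group of invertible two-sided $\calO$-ideals is generated by the central ideals $n\calO$ ($n\in\Q^\times$) together with the unique prime two-sided ideal $\fP$ above $p$, which satisfies $\Nrd(\fP)=p$ and $\fP^{2}=p\calO$. Since each $n\calO$ is left-principal and $\fP^{2}$ is left-principal, the image $\mathcal{F}$ is generated by the class $[\fP]$ and therefore has size at most $2$.

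It remains to decide when $\fP$ is principal as a left $\calO$-ideal. By Theorem~\ref{theo:V}(i), $\fP$ corresponds to an isogeny $\phi_{\fP}\colon E\to E_{\fP}$ of degree $\Nrd(\fP)=p$. Because $E$ is supersingular, its unique order-$p$ finite flat subgroup scheme is $\ker\pi$, so up to isomorphism $\phi_{\fP}$ is the $p$-power Frobenius $\pi\colon E\to E^{(p)}$. By Theorem~\ref{theo:V}(ii), $\fP$ is principal iff $\phi_{\fP}\in\End(E)$, iff $E^{(p)}\cong E$, iff $j(E)=j(E)^{p}$, iff $j(E)\in\F_{p}$. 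Hence $|\mathcal{F}|=1$ precisely when $j(E)\in \F_{p}$ and $|\mathcal{F}|=2$ otherwise, the two representatives in the latter case being $E$ and $E^{(p)}$.

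The main obstacle is the structural claim that the group of invertible two-sided $\calO$-ideals is generated by the central ideals $n\calO$ and by $\fP$. This reduces via localisation to a local computation at each finite prime: at $q\neq p$, $\calO\otimes\Z_{q}\cong \M_{2}(\Z_{q})$ whose two-sided ideals are the central $q^{n}\M_{2}(\Z_{q})$; at $p$, $\calO\otimes\Z_{p}$ is the unique maximal order in the quaternion division algebra over $\Q_{p}$, whose two-sided ideal lattice is generated by the maximal ideal of reduced norm $p$. Assembling these local descriptions globally is standard but forms the technical heart of the argument.
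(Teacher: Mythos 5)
Your proof is correct. The paper itself does not supply a proof of this lemma -- it is stated as a refinement of Voight's Lemma 42.4.1 and the reader is referred there -- so there is no "paper's own proof" to compare against line by line. Your argument is the standard one and, as far as I can tell, is essentially what Voight's treatment amounts to.

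A few remarks on the details. The reduction of $\mathcal{F}$ to left ideal classes represented by two-sided ideals via Skolem--Noether is correct: if $\calO_R(I) = \mu\calO\mu^{-1}$ then $\calO_L(I\mu)=\calO_L(I)=\calO$ and $\calO_R(I\mu)=\mu^{-1}\calO_R(I)\mu=\calO$, and $[I\mu]=[I]$ by Theorem~\ref{theo:V}(iii); conversely a two-sided ideal of a maximal order automatically has both left and right order equal to $\calO$. The structural claim about two-sided ideals -- that they are generated by the central fractional ideals $n\calO$ together with the unique prime $\fP$ above $p$ with $\fP^2=p\calO$ -- is a standard consequence of the local computation you sketch (at split primes $q\neq p$, two-sided ideals of $M_2(\Z_q)$ are central; at $p$ the completion is the maximal order in a division algebra over $\Q_p$ with cyclic two-sided ideal monoid). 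Since $n\calO$ and $p\calO=\fP^2$ are left-principal, $\mathcal{F}=\{[\calO],[\fP]\}$, giving $|\mathcal{F}|\le 2$, and $|\mathcal{F}|\ge 1$ is clear.

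The identification of $\phi_\fP$ with Frobenius is the right move: $\Nrd(\fP)=p$, and since $E$ is supersingular it has no separable degree-$p$ isogeny, so a degree-$p$ isogeny must be purely inseparable and hence factors through $\pi: E\to E^{(p)}$ up to isomorphism. Then by Theorem~\ref{theo:V}(iii) applied with $J=\calO$, $\fP$ is principal iff $E_\fP\cong E$ iff $E^{(p)}\cong E$ iff $j(E)^p=j(E)$ iff $j(E)\in\F_p$, which completes the dichotomy. This is a clean and complete argument.
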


\begin{lemma} \label{lemma:unique} Suppose $E$ is a supersingular elliptic curve over $\F_{p^2}$. Then $E$ is defined over $\F_p$ if only if that $\End(E)$ contains an element with minimal polynomial $x^2+p$.
Moreover, if $j(E)\neq 0, 1728$, then the absolute Frobenius $\pi=((x,y)\mapsto (x^p,y^p))\in \End(E)$ is the only isogeny up to a sign satisfying $x^2+p=0$.
\end{lemma}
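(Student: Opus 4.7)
The plan is to prove the equivalence in two directions by exploiting the $p$-power Frobenius isogeny, then deduce the uniqueness from the rigidity of the Frobenius factorization together with $\Aut(E)=\{\pm 1\}$ when $j(E)\neq 0,1728$.

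For the forward direction, I would assume $E$ has a model $E_0$ over $\F_p$. Because $E$ is supersingular, $\End(E_0)=\End(E)$ after base change, so it suffices to show that the $p$-power Frobenius endomorphism $\pi_{E_0}$ has minimal polynomial $x^2+p$. Writing the characteristic equation $\pi_{E_0}^2-t\pi_{E_0}+p=0$ with $t=p+1-\#E_0(\F_p)$, supersingularity gives $p\mid t$ while the Hasse bound gives $|t|\leq 2\sqrt{p}<p$ since $p>3$. Hence $t=0$ and $\pi_{E_0}^2=-p$.

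For the reverse direction, I would take $\alpha\in\End(E)$ with $\alpha^2=-p$. Then $\alpha\bar\alpha=-\alpha^2=p$, so $\deg\alpha=\Nrd(\alpha)=p$. Since $E$ is supersingular, $E[p](\overline{\F}_p)=0$, so no isogeny out of $E$ of degree $p$ can be separable; therefore $\alpha$ is purely inseparable of degree $p$. By the universal property of the relative Frobenius, $\alpha$ factors as $\alpha=\psi\circ\pi_p$, where $\pi_p\colon E\to E^{(p)}$ is the $p$-power Frobenius and $\psi\colon E^{(p)}\to E$ is an isogeny of degree one, i.e.\ an isomorphism. Thus $E^{(p)}\cong E$, and comparing $j$-invariants yields $j(E)^p=j(E^{(p)})=j(E)$, so $j(E)\in\F_p$, which gives $E$ a model over $\F_p$.

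For the ``moreover'' part, assume $j(E)\neq 0,1728$ and choose a model of $E$ over $\F_p$ so that $E^{(p)}=E$ and $\pi=\pi_p\in\End(E)$. Any $\alpha\in\End(E)$ with $\alpha^2=-p$ factors as $\alpha=\psi\circ\pi$ by the argument above, but now $\psi\in\Aut(E)$. Since $p>3$ and $j(E)\neq 0,1728$, one has $\Aut(E)=\{\pm 1\}$, forcing $\alpha=\pm\pi$. The main technical obstacle is the rigorous use of the factorization $\alpha=\psi\circ\pi_p$: this relies on the universal property that every purely inseparable isogeny of height one between elliptic curves factors uniquely through the relative Frobenius. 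Once this standard fact is invoked, the rest is bookkeeping on degrees, $j$-invariants, and automorphism groups.
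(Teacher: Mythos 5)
Your proof is correct, and the ``moreover'' part coincides with the paper's argument: both observe that any $\phi$ with $\phi^2=-p$ has trivial kernel (since $E[p]=\{\infty\}$ for a supersingular curve), hence is purely inseparable of degree $p$, factor it through the $p$-power Frobenius as $\phi=\lambda\circ\pi$ with $\lambda$ of degree one, and then use $\Aut(E)=\{\pm 1\}$ for $j\neq 0,1728$ to conclude $\phi=\pm\pi$. Where you genuinely diverge is in the equivalence: the paper simply cites Delfs--Galbraith [GD16, Proposition~2.4], whereas you supply a self-contained proof. Your forward direction (trace of Frobenius is divisible by $p$ for a supersingular curve over $\F_p$, and the Hasse bound $|t|\le 2\sqrt p<p$ forces $t=0$ when $p>3$) and your reverse direction (the Frobenius factorization gives an isomorphism $E^{(p)}\cong E$, whence $j(E)^p=j(E)$) are both standard and correct, and have the small virtue of making visible exactly where $p>3$ is used. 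Neither route is stronger than the other; yours is just self-contained where the paper defers to a reference.
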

\begin{proof} The equivalence follows from \cite[Proposition 2.4]{GD16}.

Suppose that $\phi \in \End(E)$ satisfying $\phi^2=[-p]$. Then $\hat{\phi}=-\phi$ and $\hat{\phi}\circ \phi=[p]$. Since $E$ is supersingular, $E[p]=\{\infty\}$, thus $\ker \phi =\{\infty\}$ and $\phi$ is inseparable. From \cite[Corollary 2.12]{AEC}, $\phi=\lambda\circ\pi$, where $\lambda \in \End(E)$. Then $\deg(\lambda)=1$. From \cite[Corollary 2.4.1]{AEC}, $\lambda \in \Aut(E)=\{\pm 1\}$ when $j(E) \neq 0$ or $1728$. Thus $\phi=\pm \pi$.
\end{proof}

Ibukiyama~\cite{Ibukiyama} has given an explicit description of all maximal orders $\calO$ in $B_{p,\infty}$ containing a root $\epsilon$ of $x^2+p=0$. Regard $\calO$ and $\Q(\epsilon)$ as subsets in $B_{p,\infty}$, then  $\Q(\epsilon)\cap \calO$ is either $\Z[\epsilon]\cong \Z[\sqrt{-p}]$ or $\Z[\frac{1+\epsilon}{2}]\cong \Z[\frac{1+\sqrt{-p}}{2}]$ where in the latter case $p\equiv 3\bmod{4}$.
Let $q$ be a prime such that
\begin{equation}\label{eq:1}
\big(\frac{p}{q}\big)=-1, \quad q\equiv 3 \mod 8.
\end{equation}
Then the definite quaternion algebra $H(-q,-p)=\Q\langle1,i,j,k\rangle$ with $i^2=-q$, $ j^2=-p$ and $k=ij=-ji$ is also ramified only at $p$ and $\infty$ and we can identify it with $B_{p,\infty}$. By \eqref{eq:1}, $\big(\frac{-p}{q}\big)=1$. Let $r$ be an integer such that $r^2+p \equiv 0 \mod q$ and
 \[ \calO(q):=\Z\langle1, \frac{1+i}{2},\frac{j-k}{2},\frac{ri-k}{q}\rangle. \]
If $p\equiv 3 \mod 4$ and we allow $q=1$, let $r'$ be an integer such that ${r'}^2+p \equiv 0 \mod 4q$ and
 \[ \calO'(q):=\Z\langle1, \frac{1+j}{2},i,\frac{r'i-k}{2q}\rangle. \]
Then $\calO(q)$ and $\calO'(q)$  are maximal orders in $B_{p,\infty}$ which are independent of the choices of $r$ and $r'$ up to isomorphism. From ~\cite{Ibukiyama}, we have

\begin{theorem}\label{THEO:I}
Assume that $\calO$ is a maximal order in $B_{p,\infty}$ containing an element $\epsilon$ with minimal polynomial $x^2+p$. Then there exists a prime  $q$  satisfying condition~\eqref{eq:1} such that $\calO\cong \calO(q)$ if $\calO\cap \Q(\epsilon)=\Z[\epsilon]$  and $\calO\cong \calO'(q)$ or $\calO'(1)$ if $\calO\cap \Q(\epsilon)=\Z[\frac{1+\epsilon}{2}]$  (hence $p \equiv 3 \mod 4$).
\end{theorem}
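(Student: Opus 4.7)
The plan is to follow Ibukiyama's strategy: produce an element $i \in \calO$ of prime reduced norm $q$ that anticommutes with $\epsilon$, and then recover $\calO$ itself by comparing reduced discriminants with the model orders $\calO(q)$ or $\calO'(q)$.

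First, I would introduce the $\Z$-lattice
\[ L := \{\alpha \in \calO : \Trd(\alpha) = 0,\ \alpha\epsilon + \epsilon \alpha = 0\}, \]
which is the intersection of $\calO$ with the orthogonal complement of $\Q(\epsilon)$ inside $B_{p,\infty}$ under the reduced trace pairing $(x,y)\mapsto \Trd(x\bar y)$. Since $B_{p,\infty}$ decomposes orthogonally as $\Q(\epsilon) \oplus \Q(\epsilon)^{\perp}$ with the complement of $\Q$-dimension $2$, the lattice $L$ has rank $2$. For any nonzero $\alpha \in L$ one has $\alpha^2 = -\Nrd(\alpha)$, and $\{1, \epsilon, \alpha, \alpha\epsilon\}$ is a $\Q$-basis of $B_{p,\infty}$ giving an isomorphism $B_{p,\infty} \cong H(-\Nrd(\alpha), -p)$.

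Next, I would exhibit some $i \in L$ whose reduced norm $q$ is a prime satisfying~\eqref{eq:1}. The restriction of $\Nrd$ to $L$ is a positive definite binary quadratic form over $\Z$; maximality of $\calO$ (which has reduced discriminant $p$) determines the discriminant of $\Nrd|_L$ via the orthogonal splitting $\calO = (\Q(\epsilon)\cap \calO) \perp L$ up to finite index. Applying the classical theory of primes represented by positive definite binary forms, together with a Minkowski-type covolume estimate, yields primes $q = O(\sqrt{p})$ in the image of $\Nrd|_L$. The congruence $q \equiv 3 \bmod 8$ and the residue condition $\bigl(\frac{p}{q}\bigr) = -1$ are forced by the requirement that $H(-q,-p) \cong B_{p,\infty}$ be ramified only at $p$ and $\infty$, via the Hilbert symbol computations at $2$, at $q$ and at $p$.

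Third, having fixed $i \in \calO$ with $i^2 = -q$ and $i\epsilon = -\epsilon i$, I would reconstruct $\calO$ by completing the generators. In the case $\calO \cap \Q(\epsilon) = \Z[\epsilon]$, one verifies that the generators $1, \frac{1+i}{2}, \frac{\epsilon - i\epsilon}{2}, \frac{ri - i\epsilon}{q}$ of $\calO(q)$ (with $r^2 + p \equiv 0 \bmod q$) all lie in $\calO$ by a local check at each prime using the maximality of $\calO$; this gives $\calO(q)\subseteq \calO$, and since $\calO(q)$ is itself maximal of the same reduced discriminant $p$, the inclusion is an equality and $\calO \cong \calO(q)$. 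In the case $\calO \cap \Q(\epsilon) = \Z[\frac{1+\epsilon}{2}]$, which forces $p \equiv 3 \bmod 4$, the analogous comparison with the generators of $\calO'(q)$ yields $\calO \cong \calO'(q)$; when $L$ already contains an element of reduced norm $1$, the same procedure outputs the degenerate case $\calO \cong \calO'(1)$.

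The main obstacle lies in step two: rigorously extracting from $\Nrd(L)$ a \emph{prime} $q$ in the correct residue class, while simultaneously matching the $2$-adic structure of $\calO$ against the half-integer generators appearing in $\calO(q)$ and $\calO'(q)$. Ibukiyama carries this out by a careful $p$-adic and $2$-adic analysis of the symmetric bilinear form $\Trd(xy)$ on $\calO$; a modern rephrasing would invoke the local-global classification of maximal orders in quaternion algebras (as in~\cite{Voight}) to show that only the listed isomorphism types arise.
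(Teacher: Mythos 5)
The paper does not prove Theorem~\ref{THEO:I}; it is stated as a citation of Ibukiyama~\cite{Ibukiyama}, so there is no in-paper proof to compare against. Your outline is a reasonable guess at what a direct proof might look like, but two of its key steps are not merely informal --- they are wrong as stated.

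First, it is false that the conditions in~\eqref{eq:1} are ``forced by the requirement that $H(-q,-p)\cong B_{p,\infty}$ be ramified only at $p$ and $\infty$.'' Computing the local Hilbert symbols of $(-q,-p)$ gives only $(-q,-p)_q=\left(\frac{-p}{q}\right)=1$ at $q$, $(-q,-p)_p=\left(\frac{-q}{p}\right)=-1$ at $p$, and at $2$ the constraint $(-1)^{\frac{q+1}{2}\cdot\frac{p+1}{2}}=1$, which is automatic when $p\equiv 3\bmod 4$ and forces only $q\equiv 3\bmod 4$ when $p\equiv 1\bmod 4$. In particular the condition $q\equiv 3\bmod 8$ is never a consequence of ramification; it is a normalization chosen so that $\Z\bigl[\frac{1+i}{2}\bigr]$ is the maximal order of $\Q(\sqrt{-q})$ with $2$ inert, which is what makes the explicit lattices $\calO(q)$, $\calO'(q)$ maximal. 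So the real task, which your argument glosses over, is to show that the binary form $\Nrd|_L$ (whose genus depends on $\calO$) actually represents a prime $q$ in the \emph{specific} residue class of~\eqref{eq:1}. That requires identifying the discriminant of $\Nrd|_L$ by a local analysis and then invoking a Dirichlet/Chebotarev theorem for primes represented by a fixed form class --- this is precisely where the content of Ibukiyama's theorem lives, and it is missing from your sketch.

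Second, the ``Minkowski-type covolume estimate [yielding] primes $q=O(\sqrt p)$'' cannot be right. Minkowski bounds the smallest \emph{value} of the form, not the smallest \emph{prime} value, and a priori the two can be far apart. The paper's own Theorem~\ref{theo:bound}(1) and the appendix tables show that the minimal $q$ associated to a given $\calO$ regularly exceeds $\sqrt p$ and can exceed $p$, so no $O(\sqrt p)$ bound holds. Dropping the spurious size claim and replacing the Hilbert-symbol ``forcing'' argument with an actual proof that $\Nrd|_L$ represents a prime in the class of~\eqref{eq:1} would bring your sketch in line with what is needed; the final reconstruction step (comparing the order generated by $\epsilon$ and a norm-$q$ element $i$ against $\calO(q)$ or $\calO'(q)$ locally) is the right idea, though the check that $\frac{ri-k}{q}\in\calO$ resp.\ $\frac{r'i-k}{2q}\in\calO$ needs a genuine argument at $q$ and at $2$, not just an appeal to maximality.
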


\begin{remark}
Given a maximal order $\calO$ in the form of $\calO(q)$ or $\calO'(q)$ in $B_{p,\infty}$, by Lemma~\ref{lemma:OtoE}, $\calO$ corresponds to a supersingular ellptic curve $E$ over $\F_p$ such that $\calO\cong \End(E)$. Chevyrev and  Galbraith~\cite{CG14} proposed an algorithm to compute this supersingular elliptic curve with running time $O(p^{1+\epsilon})$.
\end{remark}

Let $j\in \F_p$ be a supersingular $j$-invariant and $E_j$ be the corresponding  supersingular elliptic curve defined over $\F_p$. Then $\End(E_0)\cong \calO(3)$ and $\End(E_{1728})\cong \calO'(1)$. If $j\neq 0, 1728$, then by Theorem~\ref{THEO:I} and Lemma~\ref{lemma:unique}, $\End(E_j)\cong \calO(q)$ if $\frac{1+\pi}{2}\notin \End(E_j)$ and $\End(E_j)\cong \calO'(q)$ if $\frac{1+\pi}{2}\in \End(E_j)$ for some  $q$ satisfying \eqref{eq:1}, and we can identify $\pi$ and $\pm j$ under this isomorphism.  However, $q$ is not unique.
By Lemma 1.8 and Proposition 2.1 of \cite{Ibukiyama}, one has

\begin{lemma} \label{lem:isomophic}
Suppose $q_1\neq q_2$ are primes satisfying \eqref{eq:1}. Let $K=\Q(j)\cong \Q(\sqrt{-p})$. Suppose $q_1$ and $q_2$ have prime decompositions $q_1 O_K=\mathfrak{q}_1\bar{\mathfrak{q}}_1$ and $q_2 O_K=\mathfrak{q}_2\bar{\mathfrak{q}}_2$.

\begin{itemize}

\item[(i)] $\calO(q_1)\cong \calO'(q_2)$ if and only if $|\calO(q_1)^{\times}|=|\calO'(q_2)^{\times}|=4$. Then $\calO(q_1)\not\cong \calO'(q_2)$ if one of them is isomorphic to $\End(E)$ for $j(E)\neq 1728$.

\item[(ii)] $\calO(q_1)\cong \calO(q_2)$ $\Leftrightarrow$  the equation $x^2+4py^2=q_1 q_2$ is solvable over  $\Z$ $\Leftrightarrow$ either $\mathfrak{q}_1\mathfrak{q}_2\in P_{K,\Z}(2)\ \text{or}\ \mathfrak{q}_1\bar{\mathfrak{q}}_2 \in P_{K,\Z}(2)$;

\item[(iii)] $\calO'(q_1)\cong \calO'(q_2)$  $\Leftrightarrow$   the equation $x^2+py^2=4q_1 q_2$ is solvable over  $\Z$  $\Leftrightarrow$ either $\mathfrak{q}_1\mathfrak{q}_2\in P_{K}(2)\ \text{or}\ \mathfrak{q}_1\bar{\mathfrak{q}}_2\in P_{K}(2))$.
\end{itemize}
\end{lemma}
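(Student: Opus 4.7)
The plan is to reduce each isomorphism criterion to the existence of an element $\mu \in B_{p,\infty}^\times$ of a prescribed reduced norm and then translate that norm condition into principal-ideal data in $K = \Q(\sqrt{-p})$. Given an abstract isomorphism $\calO(q_1) \cong \calO(q_2)$ (resp.\ $\calO'(q_1) \cong \calO'(q_2)$), Skolem--Noether produces $\mu \in B_{p,\infty}^\times$ with $\mu \calO(q_1) \mu^{-1} = \calO(q_2)$. Since both orders intersect $\Q(j)$ in the same quadratic ring -- namely $\Z[j]$ in the $\calO$-case and $\Z[\frac{1+j}{2}]$ in the $\calO'$-case -- after multiplying $\mu$ on the right by a suitable $\lambda \in K^\times$, I can arrange that $\mu$ lies in the normalizer of $K^\times$ in $B_{p,\infty}^\times$, which equals $K^\times \cup K^\times \omega$ for an explicit $\omega$ anticommuting with $j$.

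Writing $\mu = (a + b\omega)/d$ with $a, b \in O_K$ and $d \in \Z$, and expanding the containment $\mu \calO(q_1) \mu^{-1} \subseteq \calO(q_2)$ against the explicit $\Z$-bases $\{1, \frac{1+i}{2}, \frac{j-k}{2}, \frac{ri-k}{q}\}$ and $\{1, \frac{1+j}{2}, i, \frac{r'i-k}{2q}\}$, the integrality constraints on the coordinates of $\mu$ together with $\Nrd(\mu) \in q_1 q_2 \Z_{>0}$ force the Diophantine equation $a^2 + 4 p b^2 = q_1 q_2$ (with $a,b \in \Z$) in the $\calO$-case, and $a^2 + p b^2 = 4 q_1 q_2$ in the $\calO'$-case. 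Conversely, any integer solution to these equations produces a valid $\mu$, giving the first equivalence in each of (ii) and (iii).

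For the ideal-theoretic reformulation, I factor the norm equation in $O_K$: $(a + b\sqrt{-p})(a - b\sqrt{-p}) = 4 q_1 q_2$ combined with $q_i O_K = \mathfrak{q}_i \bar{\mathfrak{q}}_i$ forces the principal ideal $(a + b\sqrt{-p})$ to equal $2 \mathfrak{q}_1 \mathfrak{q}_2$ or $2 \mathfrak{q}_1 \bar{\mathfrak{q}}_2$ up to units, which is precisely the condition $\mathfrak{q}_1 \mathfrak{q}_2 \in P_K(2)$ or $\mathfrak{q}_1 \bar{\mathfrak{q}}_2 \in P_K(2)$, proving part (iii). The additional parity constraint in the $\calO$-case (forcing the generator's rational part to be an odd integer modulo $2 O_K$) upgrades $P_K(2)$ to $P_{K,\Z}(2)$, proving part (ii). For part (i), I compare unit groups: an isomorphism $\calO(q_1) \cong \calO'(q_2)$ would require identifying the intersection lattices $\Z[j]$ and $\Z[\frac{1+j}{2}]$ in a compatible way inside a common maximal order, which by Theorem~\ref{THEO:I} and Lemma~\ref{lemma:unique} is only possible when both unit groups have order $4$, and this corresponds to the exceptional curve $j(E) = 1728$.

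The main obstacle is the bookkeeping in the second step: carefully tracking the parity and denominator constraints on the coordinates of $\mu$ under conjugation against the specific generators of $\calO(q)$ and $\calO'(q)$. This is the computational heart of Ibukiyama's Lemma~1.8, and the precise distinction between $P_K(2)$ and $P_{K,\Z}(2)$ ultimately rests on this careful local analysis.
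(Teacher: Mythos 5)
The paper offers no proof of this lemma at all: it reads ``By Lemma 1.8 and Proposition 2.1 of [Ibukiyama], one has'' and stops there. So your proposal is not an alternative to the paper's proof so much as an attempt to reconstruct Ibukiyama's argument, which is a worthwhile exercise. The architecture (Skolem--Noether, reduce to a norm equation, factor in $O_K$) is the right shape, but two concrete steps do not hold up.

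First, the normalization of $\mu$ is not achieved by the device you describe. If $\lambda \in K^\times$, then $(\mu\lambda)\,j\,(\mu\lambda)^{-1} = \mu\,j\,\mu^{-1}$ because $\lambda$ commutes with $j$; and $(\mu\lambda)\calO(q_1)(\mu\lambda)^{-1} = \mu(\lambda\calO(q_1)\lambda^{-1})\mu^{-1}$, which is not $\calO(q_2)$ unless $\lambda$ happens to normalize $\calO(q_1)$. So right-multiplication by an arbitrary element of $K^\times$ neither moves $\mu$ into the normalizer of $K$ nor preserves the condition $\mu\calO(q_1)\mu^{-1}=\calO(q_2)$. What actually forces $\mu$ into $N_{B^\times}(K^\times) = K^\times \cup K^\times\omega$ is that $\mu j\mu^{-1}$ is a root of $x^2+p$ inside $\calO(q_2)$, together with the uniqueness of such a root up to sign. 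But that uniqueness is exactly Lemma~\ref{lemma:unique} (equivalently, Ibukiyama's Lemma~1.8 restricted to $\calO^\times=\{\pm 1\}$), and it fails precisely for the exceptional orders $\calO(3)\cong\End(E_0)$ and $\calO'(1)\cong\End(E_{1728})$. Since the present lemma is a statement about all $\calO(q)$, $\calO'(q)$, you must either treat those cases separately or give a different argument; as written, the normalization step is circular for the exceptional orders and simply misargued for the generic ones.

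Second, part (i) is only half-proved. Your reasoning correctly shows the ``only if'' direction: a conjugation taking $\calO(q_1)$ to $\calO'(q_2)$ must send $\Q(j)$ to a different quadratic subfield (else $\Z[j]=\Z[\tfrac{1+j}{2}]$), producing at least four roots of $x^2+p$ in $\calO'(q_2)$, which by Lemma~\ref{lemma:unique} forces $j(E)\in\{0,1728\}$, and a unit count then pins down $j(E)=1728$, so $|\calO'(q_2)^\times|=4$. But the statement is an ``if and only if.'' The converse requires showing that $|\calO(q_1)^\times|=|\calO'(q_2)^\times|=4$ already implies both orders are conjugate to $\End(E_{1728})\cong\calO'(1)$, i.e.\ that there is only one conjugacy class of maximal orders in $B_{p,\infty}$ with unit group of order $4$. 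That is true (it is where $j=1728$ appears with multiplicity one among supersingular invariants), but it needs to be said, and it is not in your proposal.

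The translation from the norm equations to $P_{K,\Z}(2)$ versus $P_K(2)$ is the part that does go through: for (ii), $x$ odd (forced by $q_1q_2\equiv 1\bmod 8$) makes $x+2y\sqrt{-p}\equiv x\bmod 2O_K$ with $x\in\Z$, placing the ideal in $P_{K,\Z}(2)$; for (iii), the congruence $x\equiv y\bmod 2$ (forced by $x^2+py^2\equiv 0\bmod 4$ with $p\equiv 3\bmod 4$) makes $\tfrac{x+y\sqrt{-p}}{2}\in O_K$ of norm $q_1q_2$ and prime to $2$. You should spell out these parity deductions rather than wave at ``parity constraints,'' but the content is there. The genuine gaps are the normalization of $\mu$ and the missing converse in (i).
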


\begin{definition} ~\label{def:bound B}
For $j\in \F_p$ a supersingular $j$-invariant, set
 \[ q_j:=\min\{q\mid  \End(E_j)\cong \calO(q)\ \text{or}\ \calO'(q)\}. \]
Set
 \[ M(p)=\max\{q_j\mid j\ \text{is a supersingular $j$-invariant over}\ \F_p\}. \]
\end{definition}
Certainly $q_0=3$ and $q_{1728}=1$. We shall give the values of $M(p)$ for all primes $p<2000$ in the appendix.

\begin{example} Let $p=101$. We have the following $q_j$:
 \[ q_{57}=11,\ q_{59}=59,\ q_{66}=67,\]
 \[ q_{64}=83,\ q_2=139,\ q_{21}=163. \]
Thus $q_j$ can be bigger than $p$.
\end{example}

\section{Neighborhood of supersingular elliptic curves}\label{sec: neigh of ssec}

In this section we assume that
 \begin{quote} $j\in \F_p\backslash\{0,1728\}$ is a supersingular $j$-invariant, $E=E_j$, $\End(E)=\calO$ and $q=q_j$.  \end{quote}
In this case, then
 \[ \pi=\pm j, \ \calO^\times=\{\pm 1\}, \ R:=\calO \cap \Q(i)=\begin{cases}\Z[\frac{1+i}{2}], \ &\text{if}\ \calO=\calO(q);\\ \Z[i], \ &\text{if}\ \calO=\calO'(q).
 \end{cases} \]


\begin{lemma}\label{lemma:loops}
 Suppose $\ell\nmid 2pq$. Then in the isogeny graph $\calG_{\ell}(\Fpbar)$,
\begin{itemize}
\item[(i)] if  $\frac{1+\pi}{2}\notin \calO$ and $p >q\ell$, then there are $1+{\delta}_{-q}$ loops over the vertex $[E]$;
\item[(ii)] if  $\frac{1+\pi}{2}\in \calO$ and $p >4q\ell$, then there are $1+{\delta}_{-4q}$ loops over the vertex $[E]$.
\end{itemize}
\end{lemma}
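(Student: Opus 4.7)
The plan is to translate loops at $[E]$ into elements of $\calO$ of reduced norm $\ell$ via Deuring's correspondence, then use the size hypothesis on $p$ to force these elements into the imaginary quadratic suborder $R:=\calO\cap\Q(i)$, and finally count such elements via Theorem~\ref{theo:K}.

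First I would invoke Theorem~\ref{theo:V}: loops at $[E]$ biject with principal left $\calO$-ideals of reduced norm $\ell$, and since $\calO^\times=\{\pm 1\}$, these in turn biject with $\{\alpha\in\calO:\Nrd(\alpha)=\ell\}/\{\pm 1\}$. Writing $\alpha=x+yi+zj+wk$ in $B_{p,\infty}=H(-q,-p)$, the norm equation becomes
\[ x^2+qy^2+pz^2+pqw^2=\ell. \]
The strategy is to use the $p$-heavy terms $pz^2$ and $pqw^2$, via the size hypothesis, to force $z=w=0$, confining $\alpha$ to $R$.

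For the confinement step in case (i), I would expand $\alpha$ in the $\Z$-basis of $\calO(q)$ to get $z=\lambda_3/2$ and $w=\lambda_3/2-\lambda_4/q$, yielding
\[ p(z^2+qw^2)=\frac{p}{4q}\bigl(q\lambda_3^2+(q\lambda_3-2\lambda_4)^2\bigr)\le \ell. \]
Under $p>q\ell$ this forces $q\lambda_3^2+(q\lambda_3-2\lambda_4)^2<4$; since $\calO^\times=\{\pm 1\}$ together with $j\neq 0$ rules out $q=3$, so $q\geq 11$ among primes $\equiv 3\bmod 8$, the term $q\lambda_3^2$ forces $\lambda_3=0$, after which $4\lambda_4^2<4$ gives $\lambda_4=0$. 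In case (ii) the analogous expansion in the basis of $\calO'(q)$ gives $z=\lambda_2/2$ and $w=-\lambda_4/(2q)$, so $p(z^2+qw^2)=\frac{p}{4q}(q\lambda_2^2+\lambda_4^2)\leq \ell$; the hypothesis $p>4q\ell$ produces $q\lambda_2^2+\lambda_4^2<1$, forcing $\lambda_2=\lambda_4=0$. In either case $\alpha\in R$.

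To finish, I would count principal $R$-ideals of norm $\ell$. In case (i), $R=\Z[\frac{1+i}{2}]$ is the maximal order of $\Q(\sqrt{-q})$ of discriminant $-q$; in case (ii), $R=\Z[i]$ with $i^2=-q$ is the conductor-$2$ order of discriminant $-4q$; in both cases $R^\times=\{\pm 1\}$ and $\ell$ is unramified in $R$ (since $\ell\nmid 2pq$). When $\ell$ splits in $R$ into conjugate principal primes $(\alpha),(\bar\alpha)$ there are exactly two principal $R$-ideals of norm $\ell$, contributing two loops; otherwise (inert, or split into non-principal primes) there are none. By Theorem~\ref{theo:K}, the principal-split case is precisely $\delta_D=1$, with $D=-q$ in case (i) and $D=-4q$ in case (ii), yielding the stated count of $1+\delta_D$ loops. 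The hard part is the quantitative confinement: the inequality $q\lambda_3^2<4$ in case (i) uses $q\geq 11$ essentially, making the exclusion $j\neq 0$ indispensable (the endpoint $j=0$, where $q=3$, was handled separately in \cite{YZ}).
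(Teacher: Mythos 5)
Your proof is correct and follows essentially the same route as the paper: Deuring's correspondence to reduce loops to norm-$\ell$ elements of $\calO$ modulo $\calO^\times=\{\pm1\}$, the size hypothesis on $p$ to annihilate the $j,k$-components and confine $\alpha$ to $R=\calO\cap\Q(i)$, and Theorem~\ref{theo:K} applied to $R$ (discriminant $-q$ or $-4q$) to read off the count $1+\delta_D$. The only cosmetic difference is that your confinement is phrased as the integer inequality $q\lambda_3^2+(q\lambda_3-2\lambda_4)^2<4$ whereas the paper bounds the same quantity from below by $1/q$ times $4q$ (and in fact $q=3$ is already excluded by the standing hypothesis $j\neq 0$, as you note, though a parity check on $q\lambda_3-2\lambda_4$ would close that case too).
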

\begin{proof} By Deuring's correspondence theorem, a loop in $\calG_{\ell}(\Fpbar)$ corresponds to a principal left ideal $\calO\alpha$ of reduced norm $\ell$. If  $\frac{1+\pi}{2}\notin \calO$, then $\calO= \calO(q)$. For $\alpha=x+\frac{1+i}{2}y+\frac{j-k}{2}z+\frac{ri-k}{q}w\in \calO$, suppose
 \[\Nrd(\alpha)=\left(x+\frac{y}{2}\right)^2+\left(\frac{y}{2}+
 \frac{rw}{q}\right)^2q+ \left(\frac{z}{2}\right)^2p+\left(\frac{z}{2} +\frac{w}{q}\right)^2pq=\ell.\]
If $(z,w)\neq (0,0)$, then $(\frac{z}{2})^2+(\frac{z}{2}+\frac{w}{q})^2q\geq \frac{1}{q}$, and if $p > q\ell$, then $p((\frac{z}{2})^2+(\frac{z}{2}+\frac{w}{q})^2q)>\ell$, impossible. Hence $z=w=0$. Now we need to solve the equation
\begin{equation}\label{eq:2}
\left(x+\frac{y}{2}\right)^2+\frac{y^2q}{4}=\ell
\end{equation}
in $\Z$. This is equivalent to the decomposition of the ideal $\ell R$ in the ring $R=\calO \cap \Q(i)=\Z[\frac{1+i}{2}]$. Since the discriminant of $R$ is $-q$, by Theorem~\ref{theo:K}, \eqref{eq:2} is solvable over $\Z$ if and only if $\big(\frac{-q}{\ell}\big)=1$ and $H_{-q}(x)$ splits into linear factors in $\F_{\ell}[x]$. When this is the case, \eqref{eq:2} has two pairs of solutions up to units in $R^\times=\calO^\times=\{\pm 1\}$, corresponding to two different principal left ideals of $\calO$ of reduced norm $\ell$. Hence there are two loops over $[E]$.

If  $\frac{1+\pi}{2}\in \calO$, then  $\calO=\calO'(q)$. Suppose $\alpha=x+\frac{1+j}{2}y+iz+\frac{r'i-k}{2q}\in \calO$ such that
\[\Nrd(\alpha)=\left(x+\frac{y}{2}\right)^2+
\frac{y^2 p}{4})+\left(z+\frac{r'w}{2q}\right)^2q+\frac{pw^2}{4q} =\ell.\]
If $(y,w)\neq (0,0)$, then $\frac{y^2}{4}+\frac{w^2}{4q} \geq \frac{1}{4q}$, and if $p > 4q\ell$, then $p((\frac{y}{2})^2+(\frac{w}{2q})^2q)>\ell$, impossible. Hence $y=w=0$. Now we need to solve the equation
\begin{equation}\label{eq:4}
x^2+z^2q=\ell
\end{equation}
in $\Z$. This is equivalent to the decomposition of $\ell R$ in $R=\calO \cap \Q(i)=\Z[i]$. In this case $R$ is of discriminant $-4q$, then by Theorem~\ref{theo:K}, \eqref{eq:4} is solvable over $\Z$ if and only of $\big(\frac{-4q}{\ell}\big)=1$ and $H_{-4q}(x)$ splits into linear factors in $\F_{\ell}[x]$. When this is the case, \eqref{eq:4}) has two pairs of solutions up to units in $R^\times=\calO^\times$. Thus $\calO$ has two principal left ideals of reduced norm $\ell$, corresponding to two loops over $[E]$.
\end{proof}

\begin{remark}\label{remk:1} When the assumption of the above Lemma is satisfied, by the proof above, if  $[E]$ has two loops, then the corresponding  $\alpha \notin \End(E) \cap \Q(\pi)= \calO \cap \Q(j)$. This means the loops are not defined over $\F_p$, since $ \End_{\F_p}(E) \subseteq  \End(E)\cap \Q(\pi)$.
\end{remark}

\begin{proof}[Proof of Theorem~\ref{theo:ww}] If
every edge (except loops) in $\calG_{\ell}(\Fpbar)$ has multiplicity one, then the number of vertices adjacent to $[E]$ as predicted by the Theorem is correct.
Let $X_\ell$ be the set of all  left $\calO$-ideals of reduced norm $\ell$. The first part of the Theorem is reduced  to show that any non-principal left $\calO$-ideal $J\in X_\ell$ of reduced norm $\ell$ is not equivalent to  other ideals  in $X_\ell$. We prove this by contradiction.

Assume that there exists some $I \in X_{\ell}-\{J\}$ and $\mu \in B_{p,\infty}^\times$ such that $J=I\mu$, then $\Nrd(\mu)=1$ and $\ell\mu \in J$.

If $\calO= \calO(q)$, write $\ell\mu=x+\frac{1+i}{2}y+\frac{j-k}{2}z+\frac{ri-k}{q}w$ in $\calO$. Then
\[\Nrd(\ell\mu)=\ell^2=\left(x+\frac{y}{2}\right)^2+\left(\frac{y}{2}+
\frac{rw}{q}\right)^2q+\frac{z^2 p}{2}+ \left(\frac{z}{2}+\frac{w}{q}\right)^2pq=\ell^2.\]
If $(z,w)\neq (0,0)$, then $(\frac{z}{2})^2+(\frac{z}{2}+\frac{w}{q})^2q\geq \frac{1}{q}$, and if $p > q{\ell}^2$, then $p((\frac{z}{2})^2+(\frac{z}{2}+\frac{w}{q})^2q)>{\ell}^2$, impossible. Hence $z=w=0$. Now we need to solve the equation
 \begin{equation}\label{eq:3}
 \left(x+\frac{y}{2}\right)^2+ \frac{q y^2}{4}=\ell^2
\end{equation}
in $\Z$.  Note that $(x,y)=(\pm\ell, 0)$ are  trivial solutions of \eqref{eq:3}. In these cases $\mu=\pm1$ and $J=I$ which is a contradiction. If there is a nontrivial solution of \eqref{eq:3}, then $\ell \mu R={\mathfrak{l}}^2$ or $\ell \mu R={\bar{\mathfrak{l}}}^2$. Since $q\equiv 3 \bmod 4$, the class number of $R$ is odd,  $\mathfrak{l}$ and $\bar{\mathfrak{l}}$ are both principal prime ideals of norm $\ell$ of $R$. This implies that $\delta_{-q}=1$ and $\ell R=  \mathfrak{l}\cdot \bar{\mathfrak{l}}$ splits in $R$.
Since $\ell R+(\ell\mu)R \subseteq J$, we have either $\mathfrak{l} \subseteq J$ or $\bar{\mathfrak{l}} \subseteq J$ and  hence $J=\calO \mathfrak{l}$ or $\calO \bar{\mathfrak{l}}$  is a principal left ideal in $X_{\ell}$. This is also a contradiction.
The case for $\calO=\calO'(q)$ can be proved similarly and we omit the proof here.

For the last statement, consider the $\ell$-isogenies starting from $E$, as pointed out in~\cite[Theorem 2.7]{GD16}, there are exactly $1+\big(\frac{-p}{\ell}\big)$ isogenies defined over $\F_p$, as the loops are not defined over $\F_p$ and the multiplicity of each edge (not including the loops) is one, there are at least $1+\big(\frac{-p}{\ell}\big)$ neighbors of $[E]$ in $\calG_{\ell}(\Fpbar)$ defined over $\F_p$. In the following, we will prove that when $p>q{\ell}^2$ in the first case or $p>4q{\ell}^2$ in the second case, there are exactly $1+\big(\frac{-p}{\ell}\big)$ neighbors of $[E]$ defined over $\F_p$.

Again we only show the case $\calO=\calO(q)$. The other case follows by the same argument. For an ideal $I \in X_{\ell}$, let $E_I$ denote the elliptic curve connecting with $E$ by the isogeny $\phi_I$. Then $E_I$ is defined over $\F_p$ if and only if $\calO_R(I)\cong\End(E_I)$ contains an element $\mu$ such that $\mu^2=-p$ according to Theorem~\ref{theo:V} and Lemma~\ref{lemma:unique}. Since $\ell\calO \subseteq \calO_R(I) \subseteq \frac{1}{\ell}\calO$, we may assume  $\mu=\frac{1}{\ell}(a+b\frac{1+i}{2}+c\frac{j-k}{2}+d\frac{ri-k}{q}) \in \frac{1}{\ell}\calO$. By $\mu^2=-p$, then $b=-2a$ and
 \[\left(-a+\frac{dr}{q}\right)^2q+\left(\frac{c}{2}\right)^2p+\left(
 \frac{c}{2} +\frac{d}{q}\right)^2pq=p\ell^2. \]
Thus $p \mid (-aq+dr)$. If $-aq+br\neq 0$, when $p >q\ell^2$, then $\frac{(-qa+dr)^2}{q}>p\ell^2$, not possible. Hence $-a+\frac{dr}{q}=0$ and $q \mid d$. Then $\mu=\frac{1}{\ell}(\frac{c}{2}-(\frac{c}{2}+\frac{d}{q})i)j$ with $(c,d)$ satisfying the equation
\begin{equation}\label{eq:6}
\frac{c^2}{4}+\left(\frac{c}{2}+\frac{d}{q}\right)^2q=\ell^2.
\end{equation}
Each solution of \eqref{eq:6} corresponds to a principal ideal in $R=\Z[\frac{1+i}{2}]$ of norm $\ell^2$. Since the class number of $R$ is odd when $q \equiv 3 \bmod 4$,  $\frac{c}{2}-(\frac{c}{2}+\frac{d}{q})i$ is either $\pm \ell$ or $\pm{\alpha}^2, \pm{\bar{\alpha}}^2$ if $R$ has a principal ideal $R\alpha$ of norm $\ell$. Thus either $\mu=\pm j$, or when $[E]$ has loops,  $\mu=\pm\frac{1}{\ell}\alpha^2j$ or $\pm\frac{1}{\ell}\bar{\alpha}^2j$.

We now follow the notations and ideas in the proof of \cite[Theorem 5]{YZ}. There is a ring isomorphism $\theta:\ \calO/{\ell\calO}\rightarrow M_2(\F_\ell)$ by
\[1\mapsto \begin{pmatrix}                
    1 & 0\\
    0 & 1
\end{pmatrix},\ i\mapsto \begin{pmatrix}                
    0 & -q\\
    1 & 0
\end{pmatrix},\ j\mapsto \begin{pmatrix}                
    u & qv\\
    v & -u
\end{pmatrix}\]
where $(u,\ v)$ is a solution of $u^2+qv^2= -p$ in $\F_\ell$. Let $\iota:\calO\rightarrow \calO/\ell\calO$ be the restriction map.
The set $\overline{X}_{\ell}$ of the $\ell+1$ left ideals of $M_2(\F_{\ell})$ is
\[\overline{X}_{\ell}:=\{M_2(\F_\ell)\omega, M_2(\F_\ell)\omega_a\mid a\in \F_\ell\} \]
where $\omega:=\begin{pmatrix}0 & 0\\0 & 1\end{pmatrix}$ and $\omega_a:=\begin{pmatrix}1 & a\\0 & 0\end{pmatrix}$. Under the map $\theta\circ\iota$, there is a $1$-to $1$ correspondence of $X_\ell$ and $\overline{X}_{\ell}$ compatible with multiplication. Thus
we only need to check: (i) for which ideal $I \in \overline{X}_{\ell}$,  $I\theta(j)\subseteq I$; (ii)  when $E$ has loops,  for which ideal $I \in \bar{X_{\ell}}$,  $I\theta(\alpha^2j) \subseteq \ell I= \{0\}$ or $I\theta(\bar{\alpha}^2j) = \{0\}$. Since $\det(\theta(j))=p \neq 0$ in $\F_{\ell}$, to check (ii), it suffices to check: (iii) for which ideal $I \in \bar{X_{\ell}}$,  $I\theta(\alpha^2) \subseteq \ell I= \{0\}$ or $I\theta(\bar{\alpha}^2) = \{0\}$.

When $\big(\frac{-p}{\ell}\big)=1$, we take $(u,v)=(u,0)$ where $u^2= -p\in \F_\ell$. Then $\theta(j)=\begin{pmatrix}u & 0\\0& -u\end{pmatrix}$. By computation,
 \[ \omega \theta(j)\in M_2(\F_{\ell})\omega,\ \omega_0 \theta(j)\in M_2(\F_{\ell})\omega_0,\ \omega_a \theta(j)\notin M_2(\F_{\ell})\omega_a\ (a\neq 0). \]
Hence there are exactly two ideals $I_1=\calO\ell+\calO(u+j)$ and $I_2=\calO\ell+\calO(u-j)$ in $X_{\ell}$ such that $\pm j \in \calO_R(I_1)$ and $\calO_R(I_2)$. They correspond to two edges starting from $[E]$ in $\calG_{\ell}(\Fpbar)$.
If $\big(\frac{-p}{\ell}\big)=-1$, by computation there is no $I\in \bar{X_{\ell}}$ such that $I\theta(j)\subseteq I$.

When $[E]$ has two loops, let $\alpha=x+\frac{1+i}{2}y\in R$ such that $\alpha\bar{\alpha}=\ell$, then $\ell \nmid y$, and
 \[ \theta(\alpha^2)=\begin{pmatrix}2(x+\frac{y}{2})^2 & -(x+\frac{y}{2})yq\\(x+\frac{y}{2})y& 2(x+\frac{y}{2})^2\end{pmatrix},\ \ \theta(\bar{\alpha}^2)=\begin{pmatrix}2(x+\frac{y}{2})^2 & (x+\frac{y}{2})yq\\-(x+\frac{y}{2})y& 2(x+\frac{y}{2})^2\end{pmatrix}. \]
Let $b=2\frac{x}{y}+1$ in $\F_{\ell}$. Then only
\[\omega_{- b}\theta(\alpha^2)=0,\ \omega_{b}\theta(\bar{\alpha}^2)=0.\]
These two ideals correspond to the principal left ideals $\calO{\alpha}$ and $\calO\bar{\alpha}$ in $X_{\ell}$. Thus, except the loops, there are at most two $E_I$ defined over $\F_p$. Since the multiplicity of each edges is one, there are $1+\big(\frac{-p}{\ell}\big)$ neighbors of $[E]$ in $\calG_{\ell}(\Fpbar)$ defined over $\F_p$.
\end{proof}

\begin{remark} Fix $\ell$ and $q$, then  the bound $q{\ell}^2$ or $4q{\ell}^2$ for $p$ is sharp, just like the case considered in \cite{YZ}. We have two examples. In both cases, the result in Theorem~\ref{theo:ww} does not hold when the bound is not satisfied.

(1) Let $q=11$, $\ell=13$. Then $p=1847$ is the largest prime such that $\big(\frac{-p}{q}\big)=1$ and $p<q{\ell}^2$. Let $E: y^2=x^3+1594x+447$, then $E$ is a supersingular elliptic curve defined over $\F_{1847}$ with $\End(E)\cong \calO(11)$. By computation, $[E]$ has three neighbors $j_1=1336$, $j_2=319$ and $j_3=437$ defined over $\F_{1847}$ in $\calG_{13}(\overline{\F}_{1847})$, which is larger than $1+\big(\frac{-p}{\ell}\big)=2$.

(2) Let $q=3$, $\ell=5$. Then $p=293$ is the largest prime such that $\big(\frac{-p}{q}\big)=1$ and $p<4q{\ell}^2$. Let $E: y^2=x^3+256x+73$, then $E$ is supersingular over $\F_{293}$ with $\End(E)\cong \calO'(3)$. $[E]$ has no loops but one neighbor $j_1=212$ defined over $\F_{293}$ in $\calG_{5}(\overline{\F_{293}})$ which is larger than $1+\big(\frac{-p}{\ell}\big)=0$.
\end{remark}

\begin{example}
Let $p=311$, $q=3$, $\ell=5$. The elliptic curve $E: y^2=x^3+122x+185$ is supersingular with $\End(E)\cong \calO'(3)$. In the $5$-isogeny graph $\calG_{5}(\overline{\F}_{311})$, $[E]$ has no loops (as $\big(\frac{-3}{5}\big)=-1$), and only two neighborhoods $j(E_1)=225$, $j(E_2)=19$ defined over $\F_{311}$(as $\big(\frac{-311}{5}\big)=1$). Moreover $\End(E_1)\cong \calO'(67)$ and $\End(E_2)\cong \calO'(419)$.
\end{example}

\section{The bound of $q_j$ for any supersingular $j-$invariant $j$ in $\F_p$}

In this section we identify $K=\Q(\sqrt{-p})=\Q(j)$ with class number  $h=h_K$. Note that
 \[ \begin{split} & O_K=\Z[\sqrt{-p}],\ \ \qquad D_K=-4p\quad (\text{if}\ p\equiv 1\bmod{4}),\\
 & O_K=\Z[\frac{1+\sqrt{-p}}{2}],\quad D_K=-p\quad (\text{if}\ p\equiv 3\bmod{4}). \end{split}  \]
Let $O$ be the order of $K$ of conductor $2$. Then
 \[ O=\Z+2 O_K=\begin{cases} \Z[2\sqrt{-p}], & \text{if}\ p \equiv 1 \pmod 4,\\ \Z[\sqrt{-p}], & \text{if}\ p \equiv 3 \pmod 4.\end{cases} \]
Let $L_0$ and $L_1$ be the Hilbert class field and the ring class field of $O$ over $K$. Then
 \[ \Gal(L_1/K)\cong\cl(O)\cong I_K(2)/P_{K,\Z}(2),\]
 \[ \Gal(L_0/K)\cong \cl(O_K)\cong I_K/P_K\cong I_K(2)/P_K(2).\]
By the inclusion $P_{K,\Z}(2)\subseteq P_K(2)$,  $L_1 \supseteq L_0$. Moreover, from ~\cite[Theorem 7.24]{Cox89},
 \[[L_1:L_0]=h(O)/h=\begin{cases}2, & \text{if}\ p\equiv 1 \pmod 4,\\
 3, & \text{if}\ p\equiv 3 \pmod 8,\\
 1, & \text{if}\ p\equiv 7 \pmod 8.\end{cases}\]
By properties of class fields, we know that $L_0/\Q$ and $L_1/\Q$ are Galois. Let $\zeta_8$ be a primitive eighth root of unity, then $\Q(\zeta_8)$ is a Galois extension of $\Q$. Hence $L_0(\zeta_8)$ and $L_1(\zeta_8)$ are also Galois over $\Q$. By \cite[Lemma 2.11]{Ibukiyama}, we know that if $p\equiv 3\bmod{4}$, $L_0$ and $\Q(\zeta_8)$ are linearly disjoint over $\Q$; if  $p\equiv 1\bmod{4}$, $L_1\cap \Q(\zeta_8)=L_0\cap \Q(\zeta_8)=\Q(\sqrt{-1})$. We have the following figures about field extensions.
\begin{figure}
\begin{minipage}{0.49\textwidth}
\
 \[ \xymatrix{
 & L_1(\zeta_8) \ar@{-}[dl]_{2} \ar@{-}[d]^{2} &  \\
 L_1 \ar@{-}[d]_{2} & L_0(\zeta_8) \ar@{-}[dl]_{2} \ar@{-}[dr]^{h} &  \\
 L_0\ar@{-}[d]_{h} \ar@{-}[dr]^{h} &  & \Q(\zeta_8)\ar@{-}[dl]^{2} \\
   K \ar@{-}[dr]_{2} & \Q(\sqrt{-1}) \ar@{-}[d]_{2} &   \\
    &  \Q &    } \]

\caption{Field extensions when $p\equiv 1\bmod{4}$}
\label{fig:21}
\end{minipage}
\begin{minipage}{0.49\textwidth}

 \[ \xymatrix{
 & L_1(\zeta_8) \ar@{-}[dl]_{4} \ar@{-}[d]^{1\ or \ 3} & \\
 L_1 \ar@{-}[d]_{1\ or \ 3} & L_0(\zeta_8) \ar@{-}[dl]_{4} \ar@{-}[dr]^{2h} &  \\
 L_0\ar@{-}[d]_{h}  &  & \Q(\zeta_8)\ar@{-}[ddl]^{4} \\
   K \ar@{-}[dr]_{2} &  &   \\
    &  \Q &    } \]

\caption{Field extensions when $p\equiv 3\bmod{4}$}
\label{fig:22}
\end{minipage}
\end{figure}

\begin{lemma}~\label{lem:discL_0} For $i=0$ and $1$, let $n_{i}=[L_i(\zeta_8):\Q]$ and $d_{i}=|D_{L_i(\zeta_8)}|$.
 \begin{itemize}
  \item[(i)] $K(\zeta_8)/\Q$ is an abelian extension of degree $8$ and discriminant $2^{16} p^4$.
  \item[(ii)] If $p\equiv 3\bmod{4}$, then $n_{0}=8h$, $d_{0}= 2^{16 h} p^{4 h}$. If furthermore $p\equiv 3\bmod{8}$, then $n_{1}=24 h$, $d_{1}= 2^{52 h}p^{12 h}$.

  \item[(iii)] If $p\equiv 1\bmod{4}$, then $n_{0}=4h$, $n_{1}=8 h$, $d_{0}= 2^{8h}p^{2h}$ and $d_{1}\mid 2^{21 h}p^{4h}$.
 \end{itemize}
\end{lemma}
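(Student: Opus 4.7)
The plan is to prove (i) by the conductor--discriminant formula applied to the abelian extension $K(\zeta_8)/\Q$, and for (ii) and (iii) to combine the tower formula on $L_i(\zeta_8)/K(\zeta_8)/\Q$ with Proposition~\ref{prop:disc}. For (i), since $\Gal(K(\zeta_8)/\Q)\cong(\Z/2)^3$, I would enumerate its seven quadratic subfields $\Q(\sqrt{-1})$, $\Q(\sqrt{\pm 2})$, $\Q(\sqrt{\pm p})$, $\Q(\sqrt{\pm 2p})$, record the conductor of each (determined by $p\bmod 8$), and multiply; a short case split by $p\bmod 4$ shows both sides collapse to $|D_{K(\zeta_8)}|=2^{16}p^4$.

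For (ii) and (iii), the degrees $n_i$ follow directly from Figures~\ref{fig:21} and~\ref{fig:22} via $n_i=[L_i:\Q]\cdot 4/[L_i\cap\Q(\zeta_8):\Q]$, and the relative degrees $[L_i(\zeta_8):K(\zeta_8)]$ are read from the same diagrams. I would then use $D_{L_i(\zeta_8)/\Q}=D_{K(\zeta_8)}^{[L_i(\zeta_8):K(\zeta_8)]}\cdot N_{K(\zeta_8)/\Q}(D_{L_i(\zeta_8)/K(\zeta_8)})$. Because $L_i\cap K(\zeta_8)$ equals $K$ in (ii) and $K(i)$ in (iii), the extension $L_i(\zeta_8)/K(\zeta_8)$ is the base change of $L_i/(L_i\cap K(\zeta_8))$; in particular, $L_0/K$ unramified forces $L_0(\zeta_8)/K(\zeta_8)$ unramified, so the norm factor is trivial and $d_0=D_{K(\zeta_8)}^{[L_0(\zeta_8):K(\zeta_8)]}$, giving the claimed values $2^{16h}p^{4h}$ and $2^{8h}p^{2h}$.

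For the $L_1$ case of (ii), the assumption $p\equiv 3\bmod 8$ makes $\fp_2=2O_K$ inert in $K$, and a local calculation at the prime above $2$ shows that $K(\zeta_8)$ has a unique prime $\fq$ above $2$ with $e(\fq/2)=4$, $f(\fq/2)=2$. The base-changed extension $L_1(\zeta_8)/K(\zeta_8)$ inherits the tame cubic ramification of $L_1/L_0$ at each prime above $\fq$, so the differential exponent is $e-1=2$; this gives $v_{\fq}(D_{L_1(\zeta_8)/K(\zeta_8)})=2h$ and hence $N_{K(\zeta_8)/\Q}(\fq^{2h})=2^{4h}$, producing $d_1=2^{48h}p^{12h}\cdot 2^{4h}=2^{52h}p^{12h}$.

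For the $L_1$ case of (iii), $L_1/L_0$ is wildly ramified above $2$, so a direct tower calculation is delicate; I would instead apply Proposition~\ref{prop:disc}(ii) with $M_1=L_1$, $M_2=\Q(\zeta_8)$, $N=\Q(i)$. Here $D_{\Q(\zeta_8)/\Q(i)}=(4)$, and $D_{L_1/\Q(i)}$ follows from $|D_{L_1}|$, which I would compute by conductor--discriminant on the abelian extension $L_1/K$: of the $2h$ characters of $\cl(O)$, the $h$ that factor through $\cl(O_K)$ contribute trivial conductor, while each of the other $h$ has conductor exactly $\fp_2^2$, giving $D_{L_1/K}=\fp_2^{2h}$ and $|D_{L_1}|=2^{6h}p^{2h}$. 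Pulling the divisibility back to $\Q$ through the tower $L_1(\zeta_8)/\Q(i)/\Q$ then yields $d_1\mid 2^{21h}p^{4h}$. The main obstacle is this last conductor analysis: since $(O_K/\fp_2)^{\times}=\F_2^{\times}$ is trivial when $p\equiv 1\bmod 4$, one must verify that the non-trivial character of $\cl(O)/\cl(O_K)\cong (O_K/2O_K)^{\times}/\mathrm{im}((\Z/2)^{\times}O_K^{\times})$ cannot factor through $(O_K/\fp_2)^{\times}$, so its conductor is genuinely $\fp_2^2$ and not $\fp_2$.
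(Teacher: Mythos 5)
Your proposal is correct, and it departs from the paper's proof at several points in ways worth noting. For (i) you use conductor--discriminant over $\Q$ directly, enumerating the seven quadratic subfields of the multi-quadratic field $K(\zeta_8)$; the paper instead climbs the tower $K(\zeta_8)/\Q(\zeta_8)/\Q$ and computes the tame different over $\Q(\zeta_8)$. Both give $2^{16}p^4$, and your route avoids a minor ambiguity in the paper's statement of the different. For $d_0$ your observation that $L_0/(L_0\cap K(\zeta_8))$ is unramified, so $L_0(\zeta_8)/K(\zeta_8)$ is unramified and $d_0=d_{K(\zeta_8)}^{[L_0(\zeta_8):K(\zeta_8)]}$, is cleaner than the paper's two-case application of Proposition~\ref{prop:disc}(ii). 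For $d_1$ with $p\equiv 3\bmod 8$ your tower over $K(\zeta_8)$ is equivalent in difficulty to the paper's tower over $L_0(\zeta_8)$; one small point you elide is that the cubic tame ramification survives base change because $\gcd(3,e(L_0(\zeta_8)/L_0))=\gcd(3,4)=1$, so each prime of $L_0(\zeta_8)$ over $2$ really is totally ramified in $L_1(\zeta_8)$. The most substantive difference is in (iii)'s $d_1$: the paper handles the wild ramification by bounding the different exponent $1\le m\le 5$ and does not pin down $|D_{L_1}|$, whereas you compute $D_{L_1/K}=\fp_2^{2h}$ exactly by conductor--discriminant on the abelian extension $L_1/K$ (your key verification---that a character nontrivial on $\ker(\cl(O)\to\cl(O_K))$ has conductor $\fp_2^2$ because $(O_K/\fp_2)^\times$ is trivial---is exactly right), giving $|D_{L_1}|=2^{6h}p^{2h}$. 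Carried through Proposition~\ref{prop:disc}(ii) this in fact yields the slightly sharper bound $d_1\mid 2^{20h}p^{4h}$, which of course implies the stated $d_1\mid 2^{21h}p^{4h}$. Two cosmetic slips: the group you describe should be the kernel of $\cl(O)\twoheadrightarrow\cl(O_K)$, not the quotient $\cl(O)/\cl(O_K)$; and for $p\equiv 3\bmod 8$ the prime of $K$ over $2$ is the inert prime $2O_K$ itself, so writing $\fp_2=2O_K$ is fine but you should keep this notationally distinct from the ramified prime $\fp_2$ with $\fp_2^2=2O_K$ used in the $p\equiv 1\bmod 4$ case.
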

\begin{proof} For (i), one just needs to compute the discriminant $D_{K(\zeta_8)}$. It is well known $D_{\Q(\zeta_8)}=2^8$. The extension $K(\zeta_8)/\Q(\zeta_8)$ is unramified outside $p$ and tamely ramified over all primes above $p$, hence the different $\fD_{K(\zeta_8)/\Q(\zeta_8)}=\prod_{\fp\mid p\ \text{in}\ \Q(\zeta_8)}\fp^2$ by \cite[Theorem 2.6]{Neuk99}. By Proposition~\ref{prop:disc}(i), we obtain $D_{K(\zeta_8)}$.

The degrees $n_{0}$ and $n_{1}$ follow from the two figures.

Since $L_0$ is the Hilbert class field of $K$ which is the maximal unramified abelian extension of $K$,
$D_{L_0/K}=1$ and by Proposition~\ref{prop:disc}(i),
 \[D_{L_0}=(D_K)^{h}.\]
Note that  $D_{\Q(\zeta_8)}=2^8$, $D_{\Q(\sqrt{-1})}=-2^2$. If $p\equiv 3\bmod{4}$, then $L_0$ and $\Q(\zeta_8)$ are linearly disjoint over $\Q$, and $D_{L_0}$ and $D_{\Q(\zeta_8)}$ are coprime,
by Proposition~\ref{prop:disc}(ii), then
 \[ d_0=D_{L_0(\zeta_8)}= D_{L_0}^{[\Q(\zeta_8):\Q]}\cdot D_{\Q(\zeta_8)}^{[L_0:\Q]}={2}^{16 h}\cdot p^{4h}. \]
If $p\equiv 1\bmod{4}$, $L_0$ and $\Q(\zeta_8)$ are linearly disjoint over $\Q(\sqrt{-1})$. By computation,
\[N_{\Q(\sqrt{-1})/\Q}(D_{L_0/\Q(\sqrt{-1})})=p^{h},\ N_{\Q(\sqrt{-1})/\Q}(D_{\Q(\zeta_8)/\Q(\sqrt{-1})})=2^{4}.\]
 Thus $D_{L_0/\Q(\sqrt{-1})}$ and $D_{\Q(\zeta_8)/\Q(\sqrt{-1})}$ are coprime. By Proposition~\ref{prop:disc}(ii), then
 \[ d_{0}= (D_{\Q(\zeta_8)})^{h} (D_{L_0})^2 (D_{\Q(\sqrt{-1})})^{-2h} =2^{8 h} p^{2 h}. \]

To compute $d_{1}$, note that $L_1/K$ is ramified only at primes above $2$ and $L_0/K$ is  unramified, then $L_1/L_0$ and $L_1(\zeta_8)/L_0$ are ramified only at primes above $2$. Note that $L_1/\Q$ is Galois, $L_1/L_0$ is of degree $2$ or $3$, all primes of $L_0$ above $2$ must be totally ramified in $L_1$. We also know $2$ is totally ramified in $\Q(\zeta_8)/\Q$. Let $e$, $f$ and $g$ be the ramification index, the degree of the residue extension and the number of primes above $2$ in $L_1(\zeta_8)$. Then $efg=n_{L_1}$ and $2O_{L_1(\zeta_8)}$ has the prime decomposition
 \[ 2O_{L_1(\zeta_8)}=\prod_{i=1}^g \fP^e_{1,i}. \]

 If $p\equiv 3\bmod{8}$, then primes above $2$ are  unramified in $L_0/\Q$. We  find that $e=12$, $fg=2h$ and all primes above $2$ in $L_0(\zeta_8)$ are totally (tamely) ramified in $L_1(\zeta_8)$. By \cite[Theorem 2.6]{Neuk99}, the different of $L_1(\zeta_8)/L_0(\zeta_8)$ is
\[\mathfrak{D}_{L_1(\zeta_8)/L_0(\zeta_8)}=\prod_{i=1}^g \mathfrak{P}_{1,i}^2.\]
Hence
 \[ d_{1}=d_{0}^3 N_{L_1(\zeta_8)/\Q}(\mathfrak{D}_{L_1(\zeta_8)/L_0(\zeta_8)}) = 2^{52 h}p^{12h}. \]

If $p \equiv 1 \bmod 4$, then either $e=4$ or $8$. If $e=4$, primes above $2$ are unramified in $L_1(\zeta_8)/L_0(\zeta_8)$ and the different of $L_1(\zeta_8)/L_0(\zeta_8)$ is $(1)$. In this case $d_{1}=d_{0}^2$. If $e=8$, then $fg=h$ and  primes above $2$ are wildly ramified in $L_1(\zeta_8)/L_0(\zeta_8)$. By \cite[Theorem 2.6]{Neuk99}, the different of $L_1(\zeta_8)/L_0(\zeta_8)$ is
\[\mathfrak{D}_{L_1(\zeta_8)/L_0(\zeta_8)}=\prod_{i=1}^g \mathfrak{P}_{1,i}^m,\ \text{where}\  1 \leq m\leq 5.\]
Hence
 \[  d_{1}=(d_{0})^2 N_{L_1(\zeta_8)/\Q}(\mathfrak{D}_{L_1 (\zeta_8)/L_0(\zeta_8)})\mid  2^{21 h}p^{4 h}. \qedhere \]
 \end{proof}

\begin{lemma} \label{lemma:frob} Let $q$ and $q'$ be distinct primes. Let $\sigma_3=(\zeta_8\mapsto \zeta_8^3)\in \Gal(\Q(\zeta_8)/\Q)$. Then
\begin{itemize}
 \item[(i)] $q\equiv 3\bmod{8}$ and $\left(\frac{q}{p}\right)=-1$ if and only if $[\frac{K(\zeta_8)/\Q}{q}]\in \Gal(K(\zeta_8)/\Q)$ is the unique element $\Delta$ such that $\Delta|_K=\id$ and $\Delta|_{\Q(\zeta_8)}=\sigma_3$.
 \item[(ii)] The conditions that $q$ and $q'$ satisfy \eqref{eq:1} and  $\calO(q) \cong \calO(q')$ (resp.  $\calO'(q) \cong \calO'(q')$) is equivalent to that $[\frac{\Q(\zeta_8)/\Q}{q}]=[\frac{\Q(\zeta_8)/\Q}{q'}]=\sigma_3$ and $[\frac{L_1/\Q}{q}]=[\frac{L_1/\Q}{q'}]$ (resp. $[\frac{L_0/\Q}{q}]=[\frac{L_0/\Q}{q'}]$).
\end{itemize}
\end{lemma}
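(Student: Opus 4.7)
For Part (i), the plan is to exploit the fact that $K\cap \Q(\zeta_8)=\Q$ for any prime $p>3$: the only quadratic subfields of $\Q(\zeta_8)$ are $\Q(\sqrt{2})$, $\Q(\sqrt{-1})$, $\Q(\sqrt{-2})$, none of which can equal $K=\Q(\sqrt{-p})$, and Lemma~\ref{lem:discL_0}(i) already records $[K(\zeta_8):\Q]=8$. Thus $\Gal(K(\zeta_8)/\Q)\cong\Gal(K/\Q)\times\Gal(\Q(\zeta_8)/\Q)$ as a direct product and the element $\Delta$ is well-defined. The Frobenius $[\frac{K(\zeta_8)/\Q}{q}]$ decomposes componentwise, so the condition reduces to two independent ones: the restriction to $\Q(\zeta_8)$ equals $\sigma_3$ iff $q\equiv 3\bmod 8$, and the restriction to $K$ is the identity iff $q$ splits in $K$, iff $\big(\frac{D_K}{q}\big)=1$. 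Given $q\equiv 3\bmod 8$ and $D_K\in\{-p,-4p\}$, using $\big(\frac{-1}{q}\big)=-1$ collapses this to $\big(\frac{p}{q}\big)=-1$, which is condition~\eqref{eq:1}.

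For Part (ii), I would treat the $\calO(q)\cong\calO(q')$ case in detail; the $\calO'(q)\cong\calO'(q')$ case runs the same way with $L_0$ and $P_K(2)$ in place of $L_1$ and $P_{K,\Z}(2)$. Assuming $q,q'$ both satisfy \eqref{eq:1}, one has $q,q'\equiv 3\bmod 8$ (giving the Frobenius condition in $\Q(\zeta_8)/\Q$) and both split in $K$, say $qO_K=\fq\bar{\fq}$ and $q'O_K=\fq'\bar{\fq}'$. By Lemma~\ref{lem:isomophic}(ii), $\calO(q)\cong\calO(q')$ iff $\fq\fq'\in P_{K,\Z}(2)$ or $\fq\bar{\fq}'\in P_{K,\Z}(2)$. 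Writing $\sigma=[\frac{L_1/K}{\fq}]$ and $\sigma'=[\frac{L_1/K}{\fq'}]$ and applying the Artin isomorphism $\cl(O)\cong I_K(2)/P_{K,\Z}(2)\cong \Gal(L_1/K)$, together with the fact that $\fq\bar{\fq}=qO_K\in P_{K,\Z}(2)$ (because $q$ is an odd rational integer), one obtains $[\frac{L_1/K}{\bar{\fq}}]=\sigma^{-1}$, so the ideal condition is equivalent to $\sigma'\in\{\sigma,\sigma^{-1}\}$.

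The key step is then to recognize $\{\sigma,\sigma^{-1}\}$ as the $\Gal(L_1/\Q)$-conjugacy class $[\frac{L_1/\Q}{q}]$. Since $L_1/\Q$ is Galois and $\Gal(L_1/K)$ is an abelian normal subgroup of index $2$, conjugation by any lift $\tau\in\Gal(L_1/\Q)$ of complex conjugation on $K$ is an involutive automorphism of $\Gal(L_1/K)$; explicitly, $\tau\sigma\tau^{-1}=[\frac{L_1/K}{\tau(\fq)}]=[\frac{L_1/K}{\bar{\fq}}]=\sigma^{-1}$, so this action is inversion. Because $q$ splits in $K$, the conjugacy class $[\frac{L_1/\Q}{q}]$ equals $\{\sigma,\sigma^{-1}\}$, and likewise for $q'$; the ideal condition thus translates precisely to $[\frac{L_1/\Q}{q}]=[\frac{L_1/\Q}{q'}]$. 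For the converse, the $\Q(\zeta_8)$ condition forces $q,q'\equiv 3\bmod 8$, while restricting the $L_1/\Q$-equality to $K$ shows $q,q'$ share the same splitting type in $K$; once one notes that $\calO(q),\calO(q')$ being defined forces both to split, \eqref{eq:1} holds for both and the above chain reverses to yield $\calO(q)\cong\calO(q')$.

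The main technical obstacle is the careful identification of complex conjugation with inversion on $\Gal(L_1/K)$, which relies on $qO_K$ being generated by a rational integer coprime to $2$, placing it in $P_{K,\Z}(2)$---the subgroup corresponding via class field theory to the ring class field $L_1$ of the order $O$ of conductor $2$. For the $\calO'$-case, the analogous identity $qO_K\in P_K(2)$ suffices, because $L_0$ corresponds to the full principal-ideal subgroup $P_K(2)$, and the rest of the argument is unchanged.
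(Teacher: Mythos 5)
Your proposal is correct and follows essentially the same route as the paper's own proof: reduce part (i) to the direct-product decomposition $\Gal(K(\zeta_8)/\Q)\cong\Gal(K/\Q)\times\Gal(\Q(\zeta_8)/\Q)$, and for part (ii) translate Lemma~\ref{lem:isomophic} via the Artin map into the condition $\sigma'\in\{\sigma,\sigma^{-1}\}$, then identify this pair with the $\Gal(L_1/\Q)$-conjugacy class of $q$ using that a lift $\tau$ of complex conjugation inverts $\Gal(L_1/K)$. You fill in a few details the paper leaves implicit (why $K\cap\Q(\zeta_8)=\Q$, and why $\tau$-conjugation is inversion via $\fq\bar\fq=qO_K\in P_{K,\Z}(2)$), but the structure and key steps coincide with the paper's argument.
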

\begin{proof} The condition that $q\equiv 3\bmod{8}$ is equivalent to $[\frac{\Q(\zeta_8/\Q}{q}]=\sigma_3\in \Gal(\Q(\zeta_8)/\Q)$. The condition $(\frac{p}{q})=-1$ is equivalent to that $q$  splits in $K$, i.e., $[\frac{K/\Q}{q}]=[\frac{K/\Q}{q'}]=1$. So (i) holds.

Let $q=\fq\bar{\fq}$ and $q=\fq'\bar{\fq'}$ be the factorization of $q$ and $q'$ in $K$.
By Lemma ~\ref{lem:isomophic}(ii), the condition that $\calO(q) \cong \calO(q')$
is equivalent to
 \[\{\big[\frac{L_1/K}{{\mathfrak{q}}}\big], \big[\frac{L_1/K}{{\mathfrak{q}}}\big]^{-1} \}=\{\big[\frac{L_1/K}{{\mathfrak{q}}'}\big], \big[\frac{L_1/K}{{\mathfrak{q}}'}\big]^{-1}\}. \]
Let $\tau$ be a lifting of $(\sqrt{-p}\mapsto -\sqrt{-p})\in \Gal(K/\Q)$ in $\Gal(L_1/\Q)$ and let $\fQ$ be a prime of $L_1$ above $q$, then
$[\frac{L_1/\Q}{q}]=\{[\frac{L_1/\Q}{\fQ}], \tau [\frac{L_1/\Q}{\fQ}]\tau^{-1}\}$ (these two probably equal). When $[\frac{K/\Q}{q}]=1$, this set is equal to $\{[\frac{L_1/K}{{\mathfrak{q}}}], [\frac{L_1/K}{{\mathfrak{q}}}]^{-1}\}$.
Hence  $\calO(q) \cong \calO(q')$ and $(\frac{p}{q})=(\frac{p}{q'})=-1$ is equivalent to
 \[\big[\frac{L_1/\Q}{q}\big]=\big[\frac{L_1/\Q}{q'}\big].\]
The case for $\calO'$ follows similarly.
\end{proof}
\begin{lemma} \label{lemma:bound} Let $\gamma$ be any element in $\Gal(K(\zeta_8)/\Q)$, $C_0$ and $C_1$ be any conjugacy class in $\Gal(L_0(\zeta_8)/\Q)$ and $\Gal(L_1(\zeta_8)/\Q)$ respectively. Assuming GRH.
 \begin{itemize}
 \item[(i)] For constant $c>0$, $\pi_{\gamma}(c\sqrt{p}, K(\zeta_8)/\Q)\sim \frac{c\sqrt{p}}{4\log p}$ as $p\rightarrow \infty$.
 \item[(ii)] Suppose $p>2000$ and $x\geq p\log^4 p$, then
  \[ \frac{d_0}{|C_0|}\pi_{C_0}(x, L_0(\zeta_8)/\Q)\frac{\log x}{\sqrt{x}} \geq \begin{cases} \sqrt{x}-1.28 h\log^2 x,\ &\text{if}\ p\equiv 1\bmod 4;\\
  \sqrt{x}-2.56 h\log^2 x,\ &\text{if}\ p\equiv 3\bmod 4.
  \end{cases} \]
  \[ \frac{d_1}{|C_1|}\pi_{C_1}(x, L_1(\zeta_8)/\Q)\frac{\log x}{\sqrt{x}}  \geq \begin{cases}\sqrt{x}-2.67 h\log^2 x,\ &\text{if}\ p\equiv 1\bmod 4;\\
  \sqrt{x}-7.76 h\log^2 x,\ &\text{if}\ p\equiv 3\bmod 8.
  \end{cases} \]

 Suppose $p>2000$ and $x\geq p\log^6 p$, then
  \[ \frac{d_0}{|C_0|}\pi_{C_0}(x, L_0(\zeta_8)/\Q)\frac{\log x}{\sqrt{x}} \geq \begin{cases} \sqrt{x}-1.12 h\log^2 x,\ &\text{if}\ p\equiv 1\bmod 4;\\
  \sqrt{x}-2.24 h\log^2 x,\ &\text{if}\ p\equiv 3\bmod 4.
  \end{cases} \]
  \[ \frac{d_1}{|C_1|}\pi_{C_1}(x, L_1(\zeta_8)/\Q)\frac{\log x}{\sqrt{x}}  \geq \begin{cases}\sqrt{x}-2.33 h\log^2 x,\ &\text{if}\ p\equiv 1\bmod 4;\\
  \sqrt{x}-6.80 h\log^2 x,\ &\text{if}\ p\equiv 3\bmod 8.
  \end{cases} \]
 \end{itemize}
\end{lemma}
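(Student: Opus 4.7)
My approach is to apply the explicit Chebotarev density theorem (Theorem~\ref{them:Cheb}) to the number fields $K(\zeta_8)$, $L_0(\zeta_8)$, $L_1(\zeta_8)$ over $\Q$, using the degree and discriminant bounds supplied by Lemma~\ref{lem:discL_0}, and then extract the numerical constants by bounding the lower-order error terms against $h\log^2 x$.

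For part (i), I take $M=K(\zeta_8)$ and $N=\Q$. By Lemma~\ref{lem:discL_0}(i) we have $n_M=8$ and $\log d_M=16\log 2+4\log p$, and since $K(\zeta_8)/\Q$ is abelian the singleton $\{\gamma\}$ is a conjugacy class with $|G|/|C|=8$. Substituting $x=c\sqrt{p}$ into \eqref{eq:cheb}, the main term is
\[ \int_2^{c\sqrt p}\frac{dt}{\log t}\sim \frac{c\sqrt p}{\tfrac12\log p}=\frac{2c\sqrt p}{\log p}, \]
while the error term is
\[ \sqrt{c\sqrt p}\Bigl[\bigl(\tfrac{1}{\pi}+\tfrac{3}{\log x}\bigr)(16\log 2+4\log p)+\bigl(\tfrac{\log x}{8\pi}+\tfrac{1}{4\pi}+\tfrac{6}{\log x}\bigr)\cdot 8\Bigr]=O(p^{1/4}\log p), \]
which is $o(\sqrt p/\log p)$ as $p\to\infty$. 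Dividing by $|G|/|C|=8$ yields the asymptotic $\pi_\gamma(c\sqrt p,K(\zeta_8)/\Q)\sim \frac{c\sqrt p}{4\log p}$.

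For part (ii), I take $M=L_i(\zeta_8)$ and rearrange \eqref{eq:cheb} as
\[ \frac{n_i}{|C_i|}\pi_{C_i}(x,M/\Q)\;\geq\;\mathrm{Li}(x)-\sqrt x\Bigl[\bigl(\tfrac{1}{\pi}+\tfrac{3}{\log x}\bigr)\log d_i+\bigl(\tfrac{\log x}{8\pi}+\tfrac{1}{4\pi}+\tfrac{6}{\log x}\bigr)n_i\Bigr]. \]
Multiplying through by $\log x/\sqrt x$ and using the elementary bound $\mathrm{Li}(x)\geq x/\log x$ (valid for $x\geq 11$), the main term becomes $\sqrt x$. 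It then remains to show the error term, once multiplied by $\log x/\sqrt x$, is at most the stated constant times $h\log^2 x$. Substituting $n_i$ and $\log d_i$ from Lemma~\ref{lem:discL_0}, the bracketed quantity times $\log x$ expands to a polynomial in $\log x$ and $\log p$ whose dominant contribution is $\tfrac{\log x\,\log d_i}{\pi}+\tfrac{n_i\log^2 x}{8\pi}$; collecting the $\log^2 x$ terms gives, for example, coefficient $\tfrac{5}{2\pi}h\approx 0.796\,h$ in the case $(i=0,\,p\equiv 1\bmod 4)$, while cases $(i=0,\,p\equiv 3)$, $(i=1,\,p\equiv 1)$, and $(i=1,\,p\equiv 3\bmod 8)$ double or triple accordingly. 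Using $x\geq p\log^4 p$ (resp.\ $p\log^6 p$) together with $p>2000$ one has $\log p\leq \log x-4\log\log p$ (resp.\ $-6\log\log p$) with $\log\log p>2$; this lets the cross-term $O(h\log x\log p)$ be absorbed into a slightly inflated coefficient of $h\log^2 x$, and similarly the subleading $O(h\log x)$ and $O(h)$ terms are absorbed since $\log x\geq \log(2000\cdot\log^4 2000)>15$.

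The main technical obstacle is the tightness of the numerical constants $1.28,\,2.56,\,2.67,\,7.76$ (respectively $1.12,\,2.24,\,2.33,\,6.80$): for each of the four cases one has to verify a quadratic inequality in $y=\log x$ of the shape $\alpha y^2\geq \beta y+\gamma$, where $\alpha$ is the gap between the claimed constant and the ``pure'' $\log^2 x$ coefficient, and $\beta,\gamma$ are assembled from the $\log 2$, $\log p$, and constant contributions of the explicit Chebotarev formula. The assumption $p>2000$ combined with the improved estimate $\log p\leq \log x - 4\log\log p$ (or $-6\log\log p$) is precisely what makes each of these quadratic inequalities hold; the sharper $p\log^6 p$ hypothesis produces a larger negative correction to the $\log p\log x$ term, which is what permits the smaller constants in the second set of bounds.
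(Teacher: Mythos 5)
Your approach is essentially the same as the paper's: invoke the explicit Chebotarev bound \eqref{eq:cheb} for $K(\zeta_8)/\Q$ and $L_i(\zeta_8)/\Q$, feed in the degree and discriminant estimates of Lemma~\ref{lem:discL_0}, and then control the secondary terms by restricting to $p>2000$ and $x\geq p\log^4 p$ (resp.\ $p\log^6 p$). You have also, correctly, written $\tfrac{n_i}{|C_i|}$ on the left-hand side rather than the $\tfrac{d_i}{|C_i|}$ that appears in the statement; the factor in \eqref{eq:cheb} is $|G|/|C|=n_M/|C|$, so the lemma's $d_i$ is evidently a typo for $n_i$ given the notation $n_i=[L_i(\zeta_8):\Q]$, $d_i=|D_{L_i(\zeta_8)}|$ of Lemma~\ref{lem:discL_0}.

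The one place you diverge cosmetically from the paper is in how the non-leading contributions are packaged: the paper keeps $\log p$ explicit, observes that the full bracketed expression is monotone decreasing in both $x$ (for fixed $p$) and in $p$ (along $x=p\log^{2k}p$), and evaluates numerically at the threshold $p=2000$, $x=p\log^{2k}p$, with the crude bound $\log p/\log x\leq 1$ applied only to the term $\tfrac{n_i}{8\pi}\cdot\tfrac{\log p}{\log x}$-like piece, reproducing constants such as $1.28$, $7.76$, $6.80$. Your proposal instead uses the inequality $\log p\leq\log x-4\log\log p$ (resp.\ $-6\log\log p$) to absorb the $\log x\log p$ cross-term; this is a valid and in fact somewhat tighter route and would yield slightly smaller constants, so the stated bounds follow a fortiori. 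Either way the verification is a finite numerical check. Since you do not actually carry out that check and the paper's exposition is equally terse on this point, I regard the proposal as correct and matching the paper's method.
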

\begin{proof} We shall use the explicit formula \eqref{eq:cheb} in the Chebotarev density Theorem (Theorem~\ref{them:Cheb}).

For (i), consider the extension $K(\zeta_8)/\Q$, then $d_{K(\zeta_8)}=2^{16} p^4$ and $n_{K(\zeta_8)}=8$. Take $x=c\sqrt{p}$, the main term in \eqref{eq:cheb} is $2c\sqrt{p}/\log p$, the error term is of order $p^{\frac{1}{4}} \log p$. When $p\rightarrow \infty$, we get (i).

For (ii),  consider the case $L_1/\Q$ and $p\equiv 3\bmod 8$ case. The other cases can be treated similarly. In this case $n_1=24h$ and $d_1=2^{52h} p^{12h}$. Note that if $x>2000$,
 \[ \int_2^x\frac{dt}{\log t} =\frac{x}{\log x}-\frac{2}{\log 2}+ \int_2^x\frac{dt}{\log^2 t} \geq \frac{x}{\log x}. \]
By \eqref{eq:cheb}, if $x>2000$, then
 \[ \begin{split} & \frac{d_1}{|C_1|}\pi_{C_1}(x, L_1(\zeta_8)/\Q)\frac{\log x}{\sqrt{x}}\\   \geq &
 \sqrt{x}-h\log^2 x\left [ \frac{36\log p}{\log^2 x}+\frac{156\log 2+ 144}{\log^2 x} + \frac{52\log 2+6}{\pi\log x}+\frac{12}{\pi}\frac{\log p}{\log x} +\frac{3}{\pi}\right]. \end{split} \]
Note that $\frac{\log p}{\log x}\leq 1$ if $x\geq p$.  When $p$ is fixed and $x\geq p\log^4 p$ increases, the other terms inside $[\ \  ]$  of the above inequality decrease; when $p$ increases and $x= p\log^4 p$ or $p\log^6 p$,  the other terms inside $[\ \ ]$   also decrease. This leads to the bound in (ii).
\end{proof}

\begin{proof}[Proof of Theorem~\ref{theo:bound}]
(1) By the Brauer-Siegel Theorem, the number of supersingular $j$ over $\F_p$ is of order $O(h)=O(\sqrt{p})$, but by Lemma~\ref{lemma:bound}(i), there are only $O(\frac{\sqrt{p}}{\log p})$ many $q<C\sqrt{p}$ satisfying $q\equiv 3\bmod{8}$ and $(\frac{p}{q})=-1$ when $p\rightarrow \infty$, hence (1) holds.

(2) For $p<2000$, we check numerically in the appendix that $q_j<p\log^2 p$. Suppose $p>2000$. It suffices to find $x$ such that $\pi_{C_i}(x, L_i(\zeta_8)/\Q)>0$ for any conjugacy class $C_i$. By Lemma~\ref{lemma:bound}(ii), to have $\pi_{C_i}(x, L_i(\zeta_8)/\Q)>0$, it suffices to find $x\geq p\log^4 p$, such that $\sqrt{x}-C h \log^2 x>0$  for different $C$ there.
By the Brauer-Siegel Theorem, when $p$ is sufficiently large, $h\sim \sqrt{p}$ if $p\equiv 3\bmod 4$ or $2\sqrt{p}$  if $p\equiv 1\bmod 4$. Replace $h$ by $\sqrt{p}$ or $2\sqrt{p}$, we just need to find $x\geq p\log^4 p$, such that $\sqrt{x}-7.76 \sqrt{p} \log^2 x>0$. This is satisfied if $p>2000$ and $x=10000 p\log^4 p$.

(3) Suppose $p>2000$. It suffices to find $x$ such that $\pi_{C_i}(x, L_i(\zeta_8)/\Q)>0$ for any conjugacy class $C_i$. By Lemma~\ref{lemma:bound}(ii), we just need to find $x\geq p\log^6 p$ such that $\sqrt{x}-C h\log^2 x>0$ for different $C$ there. By \cite[Exercise 5.27]{Coh96},  $h<\sqrt{p}\log p$ if $p\equiv 3\bmod{4}$ and  $h<\sqrt{4p}\log (4p)$ if $p\equiv 1\bmod{4}$. We thus only need to find $x\geq p\log^6 p$ such that $\sqrt{x}-6.8 \sqrt{p}\log{p} \log^2 x>0$. Take $x= 10000 p \log^6p$, we can check  $\sqrt{x}-6.8 \sqrt{p}\log{p} \log^2 x>0$.

(4) Let $q_1$, $q_2$ be two distinct primes satisfying \eqref{eq:1}. If $(x,y)$ is an integer solution of  $x^2+4py^2=q_1q_2$, $y$ must be even since $q_1q_2 \equiv 1 \bmod 8$ and $x^2 \equiv 0,1,4 \bmod 8$. Thus  $x^2+4py^2=q_1q_2$ has integer solutions is equivalent to  $x^2+16py^2=q_1q_2$ has integer solutions. Thus if both $q_1$ and $q_2 <4\sqrt{p}$, the equation has no integer solution and
$\calO(q_1) \not\cong \calO(q_2)$ by Lemma~\ref{lem:isomophic}(ii). Similarly by Lemma~\ref{lem:isomophic}(iii), if both $q_1$ and $q_2 <\frac{\sqrt{p}}{2}$, the equation $x^2+py^2=4q_1q_2$ has no integer solutions, and $\calO'(q_1)\not\cong\calO'(q_2)$.  Then if $p \equiv1 \bmod 4$,  $N(4\sqrt{p})=\pi_\Delta(4\sqrt{p}, K(\zeta_8)/\Q)$. If $p \equiv 3 \bmod 4$, $N(\frac{1}{2}\sqrt{p})= 2 \pi_\Delta(\frac{1}{2}\sqrt{p}, K(\zeta_8)/\Q)$ and $N(4\sqrt{p})\geq  \pi_\Delta(4\sqrt{p}, K(\zeta_8)/\Q)+\pi_\Delta(\frac{1}{2}\sqrt{p}, K(\zeta_8)/\Q)$.
\end{proof}

\vskip 1cm
\newcommand{\etalchar}[1]{$^{#1}$}

\appendix
\section{Comparing $M(p)$ with $\sqrt{p}$ and ${p\log^2 p}$ when $p<2000$}
For a prime $p>3$, let $\M(p)$ be the maximal value of $q_j$ for $j$ a supersingular invariant over $\F_p$ defined in \S~1. The following two tables list the values of $\M(p)$ for all $p<2000$ and compare it with $\sqrt{p}$ and $p\log^2 p$.

In the following we present our algorithms to compute Table 1 and Table 2. For a finite set A, let $|A|$ denote the cardinality of $A$.
\vspace{3mm}

\noindent {\bf Algorithm~1}

\noindent Input: Prime $p \equiv 1 \bmod 4$.

\noindent Output: The value $M(p)$.

\noindent Procedure:

\begin{enumerate}
\item
Compute the set $\SSj(p)$ of all supersingular $j$-invariants in $\F_p$.

\item
Set $\SE(p)=$ the empty set.

\item  For prime $3 \leq q \leq p\log^2 p$ such that $\big(\frac{-p}{q}\big)=1$ and $q\equiv 3 \bmod 8$, compute the $j$-invariant $j_q\in F_p$ such that $\End(E_{j_q})\cong \calO(q)$. More precisely,
    \begin{itemize}
    \item[(3.1)] let $v(d)$ be the set of roots of Hilbert class polynomial $H_{d}$ in $\F_p$. Compute $v(-q)$, $v(-4p)$ and $v(-(\frac{4(r^2+p)}{q}))$.
    \item[(3.2)] compute $A=v(-q)\cap v(-4p)\cap v(-(\frac{4(r^2+p)}{q}))$, if $|A|=1$, return $A$, otherwise return $A=$ the empty set.
    \end{itemize}

\item  Set $\SE(p)=SE(p) \cup A$. Repeat Step 3 until $|\SE(p)|=|\SSj(p)|$. Return $q$.
\end{enumerate}

\begin{remark}
Recall that when $p \equiv 1 \bmod 4$, for a supersingular elliptic curve $E$ defined over $\F_p$, we have $\End(E)\cong \calO(q)$ for some $q$ satisfying
\begin{equation}\label{condtion:q}
\big(\frac{-p}{q}\big)=1\ \text{and} \ q\equiv 3 \bmod 8
\end{equation}
 Here we do a loop for $q$ satisfying \eqref{condtion:q} in an ascending order, and  compute the corresponding $j$-invariant $j$, then make them into a set. In this way, if in some step, we get the equality $\SE(p)=\SSj(p)$, then we get the maximal $q_j$.

One thing needed to explain is the following: in Step 3, we compute the associated supersingular $j$-invariant $j_q$ of $q$ by computing the common roots of $H_{-q}$, $H_{-4p}$ and $H_{-(\frac{4(r^2+p)}{q})}$ in $\F_p$. Since by ~\cite[Theorem 3]{CG14}, $j_q$ is a root of $H_{-d}$ if and only if $\calO^T(q)=\Z\langle i,j-k,\frac{2(ri-k)}{q}\rangle$ has an element of reduced norm $d$. This is the case since $i, 2j,\frac{ri-k}{2} \in \calO^T(q)$ are of reduced norm $q, 4p$ and $\frac{4(r^2+p)}{q}$ respectively. Thus if $v(-q)\cap v(-4p)\cap v(-(\frac{4(r^2+p)}{q}))$ has just one element, it must be $j_q$. If it has more than one element, we quit this $q$ and do Step 3 for the next $q$. Thus the output of algorithm 1 is equal or larger than the real $\M(p)$. But in our experiment, we find the intersection of these three sets always has one element. Anyway, the data in Table 1 and Table 2 is enough to show that $\M(p)<p\log^2 p$.
\end{remark}

\noindent {\bf Algorithm~2}

\noindent Input: Prime $p \equiv 3 \bmod 4$.

\noindent Output: The value $M(p)$.

\noindent Procedure:

\begin{enumerate}
\item Compute the set  $\SSj(p)$ of all supersingular $j$-invariants with $j \in \F_p\setminus \{1728\}$.

\item Set $\SE(p)$ and $\XE(p)$ to be the empty sets.

\item  For all prime $3 \leq q \leq p\log^2p$ such that $\big(\frac{-p}{q}\big)=1$ and $q\equiv 3 \bmod 8$, do
      \begin{itemize}

      \item [(3.1)] compute the $j$-invariant $j_q\in \F_p$ such that $\End(E_{j_q})\cong \calO(q)$ as in Algorithm 1. If $j_q \neq 1728$, set $\SE(p)=\SE(p) \cup \{j_q\}$, otherwise, set $\SE(p)=\SE(p) \cup \emptyset$
      \item [(3.2)] compute the prime ideal decomposition of $q$ in $K=\Q(\sqrt{-p})$: $(q)=\mathfrak{q}_1\mathfrak{q}_2$. If $\mathfrak{q}_1$ is not principal, set $\XE(p)=\XE(p)\cup \{[\mathfrak{q}_1],[\mathfrak{q}_2]\}$ where $[\mathfrak{q}_1]$ and $[\mathfrak{q}_2]$  are the ideal classes in the class group of $K$. Otherwise, set $\XE(p)=\XE(p) \cup \emptyset$.
      \end{itemize}

\item  Compare $|\SSj(p)|$ and $|\SE(p)|+\frac{|\XE(p)|}{2}$. If they are equal, return $q$. Otherwise repeat Step 3.
\end{enumerate}

\begin{remark}
When $p \equiv 3 \bmod 4$, for a supersingular elliptic curve $E$ defined over $\F_p$, we have $\End(E)\cong \calO(q)$ or $\calO'(q)$ for some $q$ satisfying~\eqref{condtion:q}, and for $j \neq 1728$, $\calO(q)\ncong \calO'(q)$ by Lemma~\ref{lem:isomophic}(i).
Here, we do a loop for $q$ satisfying \eqref{condtion:q} in an ascending order. First we compute the $j-$invariant $j_q$ such that $\End(E_{j_q})\cong \calO(q)$ for each $q$ as in algorithm 1. For the $j$-invariant of $\calO'(q)$, if we compute the other three Hilbert class polynomials as in the case of $\calO(q)$, the running time is very expensive, thus we use another way. We define a set $\XE(p)$ consisting of ideal classes $[\mathfrak{q}_1]$ and $[\mathfrak{q}_2]$ if they are not equal, and each correspond to one supersingular $j$-invariant $j'$ such that $\End(E_{j'})\cong \calO'(q)$, if $[\mathfrak{q}_1]=[\mathfrak{q}_2]=1$, they correspond to $j=1728$ by Lemma~\ref{lem:isomophic}(iii). Thus when $|\SSj(p)| =|\SE(p,r)|+\frac{|\XE(p,r)|}{2}$, we obtain the maximal $q_j$.
\end{remark}

\begin{table}[htbp]
  \centering
  \caption{The data of prime $p \equiv 1 \bmod 4$}
    \centering
    \begin{tabular}{||r|r|r|r||r|r|r|r||r|r|r|r||}
    \hline
    \hline

    $p$ &  $\M(p)$ & $\frac{\M(p)}{\sqrt{p}}$ & $\frac{\M(p)}{p\log^2 p}$ & $p$ &  $\M(p)$ & $\frac{\M(p)}{\sqrt{p}}$ & $\frac{\M(p)}{p\log^2 p}$ & $p$ &  $\M(p)$ & $\frac{\M(p)}{\sqrt{p}}$ & $\frac{\M(p)}{p\log^2 p}$ \\ \hline

    5     & 3     & 1.34  & 0.232  & 557   & 491   & 20.80  & 0.022  & 1193  & 1483  & 42.94  & 0.025  \\
    13    & 11    & 3.05  & 0.129  & 569   & 4219  & 176.87  & 0.184  & 1201  & 283   & 8.17  & 0.005  \\
    17    & 11    & 2.67  & 0.081  & 577   & 331   & 13.78  & 0.014  & 1213  & 619   & 17.77  & 0.010  \\
    29    & 19    & 3.53  & 0.058  & 593   & 587   & 24.11  & 0.024  & 1217  & 1499  & 42.97  & 0.024  \\
    37    & 19    & 3.12  & 0.039  & 601   & 811   & 33.08  & 0.033  & 1229  & 1987  & 56.68  & 0.032  \\
    41    & 211   & 32.95  & 0.373  & 613   & 307   & 12.40  & 0.012  & 1237  & 739   & 21.01  & 0.012  \\
    53    & 67    & 9.20  & 0.080  & 617   & 379   & 15.26  & 0.015  & 1249  & 2003  & 56.68  & 0.032  \\
    61    & 59    & 7.55  & 0.057  & 641   & 1787  & 70.58  & 0.067  & 1277  & 1499  & 41.95  & 0.023  \\
    73    & 43    & 5.03  & 0.032  & 653   & 491   & 19.21  & 0.018  & 1289  & 1091  & 30.39  & 0.017  \\
    89    & 163   & 17.28  & 0.091  & 661   & 571   & 22.21  & 0.020  & 1297  & 179   & 4.97  & 0.003  \\
    97    & 59    & 5.99  & 0.029  & 673   & 107   & 4.12  & 0.004  & 1301  & 4523  & 125.40  & 0.068  \\
    101   & 163   & 16.22  & 0.076  & 677   & 2203  & 84.67  & 0.077  & 1321  & 787   & 21.65  & 0.012  \\
    109   & 59    & 5.65  & 0.025  & 701   & 1259  & 47.55  & 0.042  & 1361  & 4027  & 109.16  & 0.057  \\
    113   & 67    & 6.30  & 0.027  & 709   & 379   & 14.23  & 0.012  & 1373  & 827   & 22.32  & 0.012  \\
    137   & 83    & 7.09  & 0.025  & 733   & 419   & 15.48  & 0.013  & 1381  & 691   & 18.59  & 0.010  \\
    149   & 619   & 50.71  & 0.166  & 757   & 379   & 13.77  & 0.011  & 1409  & 1619  & 43.13  & 0.022  \\
    157   & 107   & 8.54  & 0.027  & 761   & 2003  & 72.61  & 0.060  & 1429  & 739   & 19.55  & 0.010  \\
    173   & 307   & 23.34  & 0.067  & 769   & 827   & 29.82  & 0.024  & 1433  & 1907  & 50.38  & 0.025  \\
    181   & 163   & 12.12  & 0.033  & 773   & 547   & 19.67  & 0.016  & 1481  & 4019  & 104.43  & 0.051  \\
    193   & 19    & 1.37  & 0.004  & 797   & 1987  & 70.38  & 0.056  & 1489  & 883   & 22.88  & 0.011  \\
    197   & 179   & 12.75  & 0.033  & 809   & 1171  & 41.17  & 0.032  & 1493  & 947   & 24.51  & 0.012  \\
    229   & 179   & 11.83  & 0.026  & 821   & 1051  & 36.68  & 0.028  & 1549  & 787   & 20.00  & 0.009  \\
    233   & 139   & 9.11  & 0.020  & 829   & 827   & 28.72  & 0.022  & 1553  & 1427  & 36.21  & 0.017  \\
    241   & 307   & 19.78  & 0.042  & 853   & 491   & 16.81  & 0.013  & 1597  & 811   & 20.29  & 0.009  \\
    257   & 547   & 34.12  & 0.069  & 857   & 1627  & 55.58  & 0.042  & 1601  & 2707  & 67.65  & 0.031  \\
    269   & 739   & 45.06  & 0.088  & 877   & 443   & 14.96  & 0.011  & 1609  & 1571  & 39.17  & 0.018  \\
    277   & 139   & 8.35  & 0.016  & 881   & 1723  & 58.05  & 0.043  & 1613  & 2027  & 50.47  & 0.023  \\
    281   & 691   & 41.22  & 0.077  & 929   & 1579  & 51.81  & 0.036  & 1621  & 811   & 20.14  & 0.009  \\
    293   & 691   & 40.37  & 0.073  & 937   & 659   & 21.53  & 0.015  & 1637  & 1259  & 31.12  & 0.014  \\
    313   & 179   & 10.12  & 0.017  & 941   & 4603  & 150.05  & 0.104  & 1657  & 947   & 23.26  & 0.010  \\
    317   & 211   & 11.85  & 0.020  & 953   & 859   & 27.83  & 0.019  & 1669  & 971   & 23.77  & 0.011  \\
    337   & 67    & 3.65  & 0.006  & 977   & 683   & 21.85  & 0.015  & 1693  & 971   & 23.60  & 0.010  \\
    349   & 499   & 26.71  & 0.042  & 997   & 571   & 18.08  & 0.012  & 1697  & 1019  & 24.74  & 0.011  \\
    353   & 419   & 22.30  & 0.034  & 1009  & 571   & 17.98  & 0.012  & 1709  & 2179  & 52.71  & 0.023  \\
    373   & 211   & 10.93  & 0.016  & 1013  & 827   & 25.98  & 0.017  & 1721  & 4019  & 96.88  & 0.042  \\
    389   & 1051  & 53.29  & 0.076  & 1021  & 587   & 18.37  & 0.012  & 1733  & 1451  & 34.86  & 0.015  \\
    397   & 227   & 11.39  & 0.016  & 1033  & 227   & 7.06  & 0.005  & 1741  & 1019  & 24.42  & 0.011  \\
    401   & 251   & 12.53  & 0.017  & 1049  & 3011  & 92.97  & 0.059  & 1753  & 1019  & 24.34  & 0.010  \\
    409   & 331   & 16.37  & 0.022  & 1061  & 691   & 21.21  & 0.013  & 1777  & 1019  & 24.17  & 0.010  \\
    421   & 211   & 10.28  & 0.014  & 1069  & 1579  & 48.29  & 0.030  & 1789  & 907   & 21.44  & 0.009  \\
    433   & 251   & 12.06  & 0.016  & 1093  & 547   & 16.55  & 0.010  & 1801  & 859   & 20.24  & 0.008  \\
    449   & 659   & 31.10  & 0.039  & 1097  & 2371  & 71.59  & 0.044  & 1861  & 4219  & 97.80  & 0.040  \\
    457   & 83    & 3.88  & 0.005  & 1109  & 2851  & 85.61  & 0.052  & 1873  & 331   & 7.65  & 0.003  \\
    461   & 1531  & 71.31  & 0.088  & 1117  & 563   & 16.85  & 0.010  & 1877  & 1123  & 25.92  & 0.011  \\
    509   & 3923  & 173.88  & 0.198  & 1129  & 211   & 6.28  & 0.004  & 1889  & 4523  & 104.07  & 0.042  \\
    521   & 2243  & 98.27  & 0.110  & 1153  & 659   & 19.41  & 0.011  & 1901  & 3019  & 69.24  & 0.028  \\
    541   & 283   & 12.17  & 0.013  & 1181  & 4019  & 116.95  & 0.068  & 1913  & 1483  & 33.91  & 0.014  \\ \hline  \hline

    \end{tabular}%
  \label{tab:addlabel}%
\end{table}%

\small
\begin{table}[htbp]
  \centering
  \caption{The data of prime $p \equiv 3 \bmod 4$}
    \begin{tabular}{||r|r|r|r||r|r|r|r||r|r|r|r||}
    \hline
    \hline
    $p$ &  $\M(p)$ & $\frac{\M(p)}{\sqrt{p}}$ & $\frac{\M(p)}{p\log^2 p}$ & $p$ &  $\M(p)$ & $\frac{\M(p)}{\sqrt{p}}$ & $\frac{\M(p)}{p\log^2 p}$ & $p$ &  $\M(p)$ & $\frac{\M(p)}{\sqrt{p}}$ & $\frac{\M(p)}{p\log^2 p}$ \\ \hline
    7     & 3     & 1.13  & 0.113  & 563   & 1259  & 53.06  & 0.056  & 1291  & 739   & 20.57  & 0.011  \\
    11    & 3     & 0.90  & 0.047  & 571   & 179   & 7.49  & 0.008  & 1303  & 227   & 6.29  & 0.003  \\
    19    & 11    & 2.52  & 0.067  & 587   & 419   & 17.29  & 0.018  & 1307  & 2099  & 58.06  & 0.031  \\
    23    & 3     & 0.63  & 0.013  & 599   & 859   & 35.10  & 0.035  & 1319  & 1723  & 47.44  & 0.025  \\
    31    & 19    & 3.41  & 0.052  & 607   & 347   & 14.08  & 0.014  & 1327  & 211   & 5.79  & 0.003  \\
    43    & 11    & 1.68  & 0.018  & 619   & 443   & 17.81  & 0.017  & 1367  & 811   & 21.93  & 0.011  \\
    47    & 59    & 8.61  & 0.085  & 631   & 163   & 6.49  & 0.006  & 1399  & 859   & 22.97  & 0.012  \\
    59    & 307   & 39.97  & 0.313  & 643   & 379   & 14.95  & 0.014  & 1423  & 251   & 6.65  & 0.003  \\
    67    & 19    & 2.32  & 0.016  & 647   & 1163  & 45.72  & 0.043  & 1427  & 3083  & 81.61  & 0.041  \\
    71    & 43    & 5.10  & 0.033  & 659   & 907   & 35.33  & 0.033  & 1439  & 1451  & 38.25  & 0.019  \\
    79    & 19    & 2.14  & 0.013  & 683   & 467   & 17.87  & 0.016  & 1447  & 1163  & 30.57  & 0.015  \\
    83    & 131   & 14.38  & 0.081  & 691   & 419   & 15.94  & 0.014  & 1451  & 883   & 23.18  & 0.011  \\
    103   & 59    & 5.81  & 0.027  & 719   & 1459  & 54.41  & 0.047  & 1459  & 1579  & 41.34  & 0.020  \\
    107   & 83    & 8.02  & 0.036  & 727   & 419   & 15.54  & 0.013  & 1471  & 619   & 16.14  & 0.008  \\
    127   & 19    & 1.69  & 0.006  & 739   & 283   & 10.41  & 0.009  & 1483  & 1051  & 27.29  & 0.013  \\
    131   & 379   & 33.11  & 0.122  & 743   & 523   & 19.19  & 0.016  & 1487  & 2339  & 60.66  & 0.029  \\
    139   & 107   & 9.08  & 0.032  & 751   & 163   & 5.95  & 0.005  & 1499  & 1667  & 43.06  & 0.021  \\
    151   & 43    & 3.50  & 0.011  & 787   & 467   & 16.65  & 0.013  & 1511  & 1979  & 50.91  & 0.024  \\
    163   & 43    & 3.37  & 0.010  & 811   & 499   & 17.52  & 0.014  & 1523  & 907   & 23.24  & 0.011  \\
    167   & 211   & 16.33  & 0.048  & 823   & 131   & 4.57  & 0.004  & 1531  & 3907  & 99.85  & 0.047  \\
    179   & 227   & 16.97  & 0.047  & 827   & 491   & 17.07  & 0.013  & 1543  & 883   & 22.48  & 0.011  \\
    191   & 251   & 18.16  & 0.048  & 839   & 3467  & 119.69  & 0.091  & 1559  & 2531  & 64.10  & 0.030  \\
    199   & 227   & 16.09  & 0.041  & 859   & 499   & 17.03  & 0.013  & 1567  & 907   & 22.91  & 0.011  \\
    211   & 59    & 4.06  & 0.010  & 863   & 547   & 18.62  & 0.014  & 1571  & 6947  & 175.27  & 0.082  \\
    223   & 131   & 8.77  & 0.020  & 883   & 227   & 7.64  & 0.006  & 1579  & 563   & 14.17  & 0.007  \\
    227   & 139   & 9.23  & 0.021  & 887   & 971   & 32.60  & 0.024  & 1583  & 3557  & 89.40  & 0.041  \\
    239   & 571   & 36.93  & 0.080  & 907   & 227   & 7.54  & 0.005  & 1607  & 1597  & 39.84  & 0.018  \\
    251   & 947   & 59.77  & 0.124  & 911   & 1291  & 42.77  & 0.031  & 1619  & 2339  & 58.13  & 0.026  \\
    263   & 331   & 20.41  & 0.041  & 919   & 443   & 14.61  & 0.010  & 1627  & 947   & 23.48  & 0.011  \\
    271   & 179   & 10.87  & 0.021  & 947   & 563   & 18.30  & 0.013  & 1663  & 331   & 8.12  & 0.004  \\
    283   & 163   & 9.69  & 0.018  & 967   & 139   & 4.47  & 0.003  & 1667  & 2027  & 49.65  & 0.022  \\
    307   & 179   & 10.22  & 0.018  & 971   & 4051  & 130.00  & 0.088  & 1699  & 971   & 23.56  & 0.010  \\
    311   & 571   & 32.38  & 0.056  & 983   & 619   & 19.74  & 0.013  & 1723  & 443   & 10.67  & 0.005  \\
    331   & 83    & 4.56  & 0.007  & 991   & 211   & 6.70  & 0.004  & 1747  & 443   & 10.60  & 0.005  \\
    347   & 251   & 13.47  & 0.021  & 1019  & 3011  & 94.32  & 0.062  & 1759  & 691   & 16.48  & 0.007  \\
    359   & 467   & 24.65  & 0.038  & 1031  & 1907  & 59.39  & 0.038  & 1783  & 1019  & 24.13  & 0.010  \\
    367   & 211   & 11.01  & 0.016  & 1039  & 1307  & 40.55  & 0.026  & 1787  & 1163  & 27.51  & 0.012  \\
    379   & 107   & 5.50  & 0.008  & 1051  & 283   & 8.73  & 0.006  & 1811  & 4987  & 117.19  & 0.049  \\
    383   & 491   & 25.09  & 0.036  & 1063  & 883   & 27.08  & 0.017  & 1823  & 1931  & 45.23  & 0.019  \\
    419   & 1427  & 69.71  & 0.093  & 1087  & 139   & 4.22  & 0.003  & 1831  & 379   & 8.86  & 0.004  \\
    431   & 547   & 26.35  & 0.034  & 1091  & 3331  & 100.85  & 0.062  & 1847  & 2003  & 46.61  & 0.019  \\
    439   & 307   & 14.65  & 0.019  & 1103  & 947   & 28.51  & 0.017  & 1867  & 1091  & 25.25  & 0.010  \\
    443   & 331   & 15.73  & 0.020  & 1123  & 643   & 19.19  & 0.012  & 1871  & 2803  & 64.80  & 0.026  \\
    463   & 67    & 3.11  & 0.004  & 1151  & 2339  & 68.94  & 0.041  & 1879  & 2251  & 51.93  & 0.021  \\
    467   & 947   & 43.82  & 0.054  & 1163  & 691   & 20.26  & 0.012  & 1907  & 2267  & 51.91  & 0.021  \\
    479   & 787   & 35.96  & 0.043  & 1171  & 1163  & 33.99  & 0.020  & 1931  & 5347  & 121.68  & 0.048  \\
    487   & 83    & 3.76  & 0.004  & 1187  & 947   & 27.49  & 0.016  & 1951  & 1747  & 39.55  & 0.016  \\
    491   & 1187  & 53.57  & 0.063  & 1223  & 1163  & 33.26  & 0.019  & 1979  & 3571  & 80.27  & 0.031  \\
    499   & 131   & 5.86  & 0.007  & 1231  & 859   & 24.48  & 0.014  & 1987  & 1187  & 26.63  & 0.010  \\
    503   & 811   & 36.16  & 0.042  & 1259  & 3347  & 94.33  & 0.052  & 1999  & 659   & 14.74  & 0.006  \\
    523   & 331   & 14.47  & 0.016  & 1279  & 1019  & 28.49  & 0.016  &       &       &       &  \\
    547   & 139   & 5.94  & 0.006  & 1283  & 1051  & 29.34  & 0.016  &       &       &       &  \\
    \hline
    \hline
    \end{tabular}%
  \label{tab:addlabe2}%
\end{table}%


\begin{thebibliography}{ACC{\etalchar{+}}17}







\bibitem[CG14]{CG14}
I. Chevyrev and S. Galbraith.
{\em Constructing supersingular elliptic curves with a given endomorphism ring}.
LMS J. Comput. Math. \textbf{17}(2014), suppl. A, 71-91.


\bibitem[Ch96]{Coh96}
H. Cohen. {\em A course in computational algebraic number theory}. GTM \textbf{138}, springer, 1996.




\bibitem[Cx89]{Cox89}
 D. A. Cox, {\em Primes of the form $x^2+ny^2$}. John Wiley \& Sons, Second Edition, 2013.

\bibitem[De41]{Deuring}
M. Deuring. {\em Die Typen der Multiplikatorenringe elliptischer Funktionenk\"orper}. Abh. Math. Sem. Hansischen Univ.\textbf{14}(1941), 197--272.

\bibitem[DJP14]{DFJLP14}
L. De~Feo, D. Jao, and J. Pl\^ut.
{\em Towards quantum-resistant cryptosystems from supersingular elliptic curve isogenies}. J. Math. Cryptol. \textbf{8}(2014), 209--247.

\bibitem[DG16]{GD16}
C. Delfs and S. Galbraith.{\em Computing isogenies between supersingular elliptic curves over $\F_p$}. Des. Codes Cryptogr. \textbf{78}(2016), 425--440.




\bibitem[GS13]{GS13}
S. Galbraith, A. Stolbunov. {\em Improved Algorithm for the Isogeny Problem for Ordinary Elliptic Curves}. Appl. Alg. in Engineering, Communication and Computing \textbf{24}(2013), 107-131.



\bibitem[GPS17]{GPS2016}
S. Galbraith, C. Petit, and J. Silva.
{\em Identification protocols and signature schemes based on supersingular isogeny problems}. In {\em  Advances in Cryptology -- ASIACRYPT} 2017, 3-33, Springer, 2017.





\bibitem[Ib82]{Ibukiyama}
T. Ibukiyama. {\em On maximal orders of division quaternion algebra over the tational number field with certain optimal embeddings}.
Nagoya. Math. J \textbf{88}(1982), 181--195.

\bibitem[KLPT14]{KLPT}
D. Kohel, K. Lauter, C. Petit, and J.-P. Tignol.
{\em On the quaternion $l$-isogeny path problem}.
LMS J.  Comput.  Math. \textbf{17}(2014), 418--432.

\bibitem[Ko96]{Kohel}
D. Kohel. {\em Endomorphism rings of elliptic curves over finite fields}. PhD thesis, University of California, Berkeley, 1996.


\bibitem[{LOX}19]{YZ}
S. Li, Y. Ouyang, Z.~Xu. {\em Neighborhood of the supersigular elliptic curve isogeny graph at $j=0$ and $1728$}. preprint, arXiv:1905.00244, 2019.





\bibitem[Ne99]{Neuk99}
J. Neukirch. {\em Algebraic number theory}. Springer-Verlag, Berlin, 1999.


\bibitem[Po69]{Pohl}
I. Pohl. {\em Bi-directional and heuristic search in path problems, 2016}.
\newblock {\em Technical Report 104, Stanford Linear Accelerator Center, Stanford, California}, 1969





\bibitem[Si09]{AEC}
J. H. Silverman, {\em The Arithmetic of Elliptic Curves}.
GTM \textbf{106}. Springer New York, 2009.

\bibitem[Si94]{ATAEC}
J. H. Silverman, {\em Advanced Topics in the Arithmetic of Elliptic Curves}.
GTM \textbf{151}. Springer New York, 1994.


\bibitem[Su17]{Sut17}
A. Sutherland. {\rm Elliptic curves Lecture 21}, https://math.mit.edu/classes/18.783/2019/lectures.html.



\bibitem[GM19]{GGM19}
L. Greni\'e, G. Molteni,
{\em  An explicit Chebotarev density theorem under GRH},
 J. Number Theory \textbf{200}(2019), 441-485.

\bibitem[Vo]{Voight}
J. Voight, {\em Quaternion Algebras}. v.0.9.15, May 26, 2019.  https:// www.math.dartmouth.edu /~jvoight/quat.html
\end{thebibliography}
\end{document}